\documentclass{amsart}
\usepackage[utf8]{inputenc}
\usepackage[maxnames=4,maxalphanames=4,style=alphabetic-verb]{biblatex}
\bibliography{main.bib}
\usepackage[margin=1in]{geometry}
\usepackage{graphicx}
\usepackage{amsmath,amsfonts,amssymb,amsthm,amsaddr,etoolbox}
\usepackage{latexsym,hyperref}
\usepackage{xcolor}
\usepackage{scrextend}
\usepackage{enumitem}
\usepackage[normalem]{ulem}
\usepackage{mathtools}
\usepackage[linewidth=1pt]{mdframed}

\author[Mittal and Kuber]{Mihir Mittal and Amit Kuber}
\address{Department of Mathematics and Statistics\\Indian Institute of Technology, Kanpur\\ Uttar Pradesh, India}
\email{mihirmittal24@gmail.com, askuber@iitk.ac.in}
\title{Exponentiable linear orders need not be transitive}
\keywords{linear orders, transitive, exponentiable, cyclically transitive}
\subjclass[2020]{06A05}

\newcommand\restr[2]{{% we make the whole thing an ordinary symbol
  \left.\kern-\nulldelimiterspace % automatically resize the bar with \right
  #1 % the function
  \littletaller % pretend it's a little taller at normal size
  \right|_{#2} % this is the delimiter
  }}

\theoremstyle{plain}
\newtheorem{defn}{Definition}[section]

\newtheorem{prop}[defn]{Proposition}
\newtheorem*{prop*}{Proposition}
\newtheorem*{thm*}{Theorem}
\newtheorem{thm}[defn]{Theorem}
\newtheorem{cor}[defn]{Corollary}

\newtheorem*{claim*}{Claim}

\theoremstyle{remark}
\newtheorem{rem}[defn]{Remark}
\theoremstyle{remark}

\theoremstyle{remark}

\theoremstyle{remark}
\newtheorem{exmp}[defn]{Example}
\theoremstyle{remark}

\theoremstyle{remark}

\theoremstyle{remark}

\theoremstyle{remark}

\theoremstyle{remark}
\newtheorem{ques}[defn]{Question}
\numberwithin{equation}{section}

\makeatletter
\let\save@mathaccent\mathaccent
\newcommand*\if@single[3]{%
  \setbox0\hbox{${\mathaccent"0362{#1}}^H$}%
  \setbox2\hbox{${\mathaccent"0362{\kern0pt#1}}^H$}%
  \ifdim\ht0=\ht2 #3\else #2\fi
  }
%The bar will be moved to the right by a half of \macc@kerna, which is computed by amsmath:
\newcommand*\rel@kern[1]{\kern#1\dimexpr\macc@kerna}
%If there's a superscript following the bar, then no negative kern may follow the bar;
%an additional {} makes sure that the superscript is high enough in this case:
\newcommand*\widebar[1]{\@ifnextchar^{{\wide@bar{#1}{0}}}{\wide@bar{#1}{1}}}
%Use a separate algorithm for single symbols:
\newcommand*\wide@bar[2]{\if@single{#1}{\wide@bar@{#1}{#2}{1}}{\wide@bar@{#1}{#2}{2}}}
\newcommand*\wide@bar@[3]{%
  \begingroup
  \def\mathaccent##1##2{%
%Enable nesting of accents:
    \let\mathaccent\save@mathaccent
%If there's more than a single symbol, use the first character instead (see below):
    \if#32 \let\macc@nucleus\first@char \fi
%Determine the italic correction:
    \setbox\z@\hbox{$\macc@style{\macc@nucleus}_{}$}%
    \setbox\tw@\hbox{$\macc@style{\macc@nucleus}{}_{}$}%
    \dimen@\wd\tw@
    \advance\dimen@-\wd\z@
%Now \dimen@ is the italic correction of the symbol.
    \divide\dimen@ 3
    \@tempdima\wd\tw@
    \advance\@tempdima-\scriptspace
%Now \@tempdima is the width of the symbol.
    \divide\@tempdima 10
    \advance\dimen@-\@tempdima
%Now \dimen@ = (italic correction / 3) - (Breite / 10)
    \ifdim\dimen@>\z@ \dimen@0pt\fi
%The bar will be shortened in the case \dimen@<0 !
    \rel@kern{0.6}\kern-\dimen@
    \if#31
      \overline{\rel@kern{-0.6}\kern\dimen@\macc@nucleus\rel@kern{0.4}\kern\dimen@}%
      \advance\dimen@0.4\dimexpr\macc@kerna
%Place the combined final kern (-\dimen@) if it is >0 or if a superscript follows:
      \let\final@kern#2%
      \ifdim\dimen@<\z@ \let\final@kern1\fi
      \if\final@kern1 \kern-\dimen@\fi
    \else
      \overline{\rel@kern{-0.6}\kern\dimen@#1}%
    \fi
  }%
  \macc@depth\@ne
  \let\math@bgroup\@empty \let\math@egroup\macc@set@skewchar
  \mathsurround\z@ \frozen@everymath{\mathgroup\macc@group\relax}%
  \macc@set@skewchar\relax
  \let\mathaccentV\macc@nested@a
%The following initialises \macc@kerna and calls \mathaccent:
  \if#31
    \macc@nested@a\relax111{#1}%
  \else
%If the argument consists of more than one symbol, and if the first token is
%a letter, use that letter for the computations:
    \def\gobble@till@marker##1\endmarker{}%
    \futurelet\first@char\gobble@till@marker#1\endmarker
    \ifcat\noexpand\first@char A\else
      \def\first@char{}%
    \fi
    \macc@nested@a\relax111{\first@char}%
  \fi
  \endgroup
}
\makeatother

\newcommand{\expl}[2]{#1^{#2}}

\newcommand{\fin}{\mathbf{c}_f}

\newcommand*{\rom}[1]{\expandafter\@slowromancap\romannumeral #1@}
\newcommand{\lo}{\mathrm{LO}}
\newcommand{\tlo}{\mathrm{TLO}}
\newcommand{\rtlo}{\mathrm{CTLO}}
\newcommand{\explo}{\mathrm{EXPLO}}
\newcommand{\dlo}{\mathrm{dLO}}

\newcommand{\unbdd}[1]{#1^{\mathrm{u}}}

\newcommand{\bm}[1]{\mathbf{#1}}

\begin{document}
\begin{abstract}
It is well-known that every transitive linear order is exponentiable. However, is the converse true? This question was posed in Chapter 8 of the textbook titled ``Linear Orderings'' by Rosenstein. We define the class CTLO of cyclically transitive linear orders that properly contains the class of transitive linear orders, and show that all discrete unbounded orders in CTLO are exponentiable, thereby providing a negative answer to the question. The class CTLO is closely related to the class of transitive cyclic orders introduced by Droste, Giraudet and Macpherson. We also discuss the closure of subclasses of CTLO under products and iterated Hausdorff condensations. 
\end{abstract}
\maketitle

\section{Introduction}
Given ordinals $\alpha$ and $\beta$, the underlying set of the ordinal exponential $\alpha^\beta$ consists of all finite support functions from $\beta$ to $\alpha$, i.e., $\beta$-indexed sequences of elements of $\alpha$ that differ from the constant sequence with value $\bm 0$ in only finitely many indices. While generalizing the base and the exponent in this definition from ordinals to arbitrary linear orders, one needs to choose an element of the base order to play the role of $\bm 0$, thereby defining $(L,a)^{L'}$ for linear orders $L,L'$ and $a\in L$. The exponential defined this way enjoys all the nice arithmetic properties of the exponential of real numbers. However, in general, the order type(=order-isomorphism class) of the exponential depends on the choice of a base point. A linear order $L$ is said to be \emph{exponentiable} if the order types of its ordinal-indexed powers are independent of the choice of a base point. Some examples of exponentiable linear orders include the integers and the rationals with usual orders.% For example, $(\omega,0)^\omega = \omega^\omega$ while $(\omega,1)^\omega = \operatornamewithlimits{\sum}\limits_{i \in \omega^*} \omega^i + \omega^\omega$, and these two are not isomorphic.

A linear order is said to be \emph{homogeneous} or \emph{transitive} if its automorphism group acts transitively on it. A transitive linear order is clearly exponentiable \cite[Proposition~8.17]{rosenstein1982linear}. Owing to the lack of examples of non-transitive yet exponentiable linear orders in the literature, Rosenstein declares the following question to be open.
\begin{ques}\cite[converse of Proposition~8.27, also p.162]{rosenstein1982linear}\label{ques 4.12}
Is every exponentiable linear order transitive?
\end{ques}

The main goal of this paper is to provide a negative answer to this question. We achieve this by defining the class $\rtlo$ of \emph{cyclically transitive} linear orders, that properly contains the class of transitive linear orders. Theorem \ref{cyctrans} shows that a linear order $L$ is cyclically transitive if and only if the associated cyclic order $\check L$ (in the sense of Droste et al. \cite{droste1995periodic}) is transitive. Thus, as a consequence of the classification of countable transitive cyclic orders by Campero-Arena and Truss \cite{campero20091}, we get a classification of all countable orders in $\rtlo$ (Corollary \ref{CTcor}).

We also investigate the closure of the class of cyclically transitive linear orders under various arithmetic operations. The class $\rtlo$ is closed under order reversal (Corollary \ref{ltloreversal}), cyclic equivalence (Corollary \ref{prop-ltlogen}) as well as multiplication on the left by transitive linear orders (Theorem \ref{prop5.5}). The class of discrete unbounded cyclically transitive linear orders is closed under multiplication (Theorem \ref{prop5.6}). Finally we show that $\rtlo$ is closed under iterated Hausdorff derivatives that are not bounded (Theorem \ref{ltlo-condens}). The main result of this paper (Theorem \ref{main-thm}) states that discrete unbounded cyclically transitive linear orders are exponentiable.

The rest of the paper is organised as follows. We recall in \S\ref{sect2} the basic definitions and notations associated with linear orders, including their arithmetic and iterated Hausdorff condensations. In \S\ref{sect4}, we define exponentiation of linear orders, and exponentiable as well as transitive linear orders. We also discuss their basic properties including a characterization of transitive linear orders by Morel (Theorem \ref{morel}). In \S\ref{sect5}, we introduce and discuss the properties of cyclically transitive linear orders, and subsequently prove the main theorem of the paper in \S\ref{section-last}. At the end of the paper, we pose some questions regarding cyclically transitive and exponentiable linear orders.

\section{Preliminaries}\label{sect2}
In this section we recall set up notations and recall some standard facts about linear orders. This material is taken from the standard, and perhaps only text \cite{rosenstein1982linear} on this topic--any notation or definition not explained in the paper can also be found in this text.

A linear order is a pair $(L,\leq)$, where $L$ is a possibly empty set and $\leq$ is a reflexive, anti-symmetric and transitive binary relation on $L$ where any two elements of $L$ are comparable under $\leq$. For brevity, we denote $(L,\leq)$ by $L$. For a linear order $L$, $L^*$ denotes the same underlying set with reverse order. Let $\lo$ denote the class of linear orders. An \emph{isomorphism} between linear orders is a bijective monotone map between them. The isomorphism class of a linear order is called its \emph{order type}.

A linear order $L$ is said to be \emph{unbounded} if for each $a\in L$ there are $b,c\in L$ such that $b<a<c$. For any class $\mathcal C$ of linear orders, the notation $\unbdd{\mathcal C}$ denotes its subclass consisting of unbounded linear orders. On the other hand, a linear order is \emph{bounded above} (resp. \emph{below}) if it has a greatest (resp. least) element. Say that a linear order is \emph{bounded} if it is bounded below as well as above. 

Let $\mathbb N$ denote the set of natural numbers, i.e., the set of non-negative integers. For $n\in\mathbb N$, let $\mathbf{n}$ denote the finite ordinal with $n$ elements. Let $\omega, \zeta$ and $\eta$ denote the order types of standard orderings on the sets $\mathbb N$, $\mathbb Z$ of integers and $\mathbb Q$ of rationals respectively.

A subset of a linear order is an \emph{interval} if it is a convex subset. Given elements $a,b$ in a linear order $L$, set $[a,b]:=\{c\in L\mid a\leq c\leq b\}\cup\{c\in L\mid b\leq c\leq a\}$.

There are two natural associative and non-commutative binary operations on linear orders, namely sum and product. Suppose $L_1,L_2$ are linear orders. Their \emph{sum} $L_1+L_2$ is defined as the disjoint union of the orders $L_1$ and $L_2$ where each element of $L_1$ is strictly below each element of $L_2$, whereas their \emph{product} $L_1 \cdot L_2$ consist of pairs of the form $(x,y)\in L_1\times L_2$ arranged in reverse dictionary/colex order, i.e., the second coordinate gets preference over the first one.

\begin{rem}
    For any linear orders $L_1, L_2 \text{ and } L_3$, we have $$L_1\cdot(L_2+L_3) \cong (L_1 \cdot L_2) + (L_1 \cdot L_3).$$ In other words, multiplication distributes over addition on the left. However, the right distributivity fails, as can be seen from $\omega \cong (\bm1 + \bm 1)\cdot \omega\not\cong \bm 1\cdot\omega+\bm1\cdot \omega\cong\omega\cdot\bm2$.
\end{rem}

We work with two important classes of linear order, namely discrete and dense.

A linear order $L$ is said to be \emph{discrete} if for every non-greatest element $a\in L$, $a$ has an immediate successor, denoted $a^+$, and for every non-smallest element $b \in L$, $b$ has an immediate predecessor, denoted $b^-$. For any class $\mathcal C$ of linear orders, the notation $\mathrm{d}\mathcal C$ denotes its subclass consisting of discrete linear orders. The order types $\bm n,\omega,\omega^*$ and $\zeta$ are discrete while $\eta$ is not.

A linear order $L$ is said to be \emph{dense} if for every $a<b$ in $L$, there exists $c \in L$ such that $a<c<b$. For any class $\mathcal C$ of linear orders, the notation $\mathrm{D}\mathcal C$ denotes its subclass consisting of discrete linear orders. A classic theorem of Cantor, whose proof technique is known as the back-and-forth method, states that the order type of a countable dense unbounded linear orders is $\eta$.

Given a linear order $L$, we define using transfinite recursion an equivalence relation $\sim_\gamma$ for each ordinal $\gamma$ and $a,b\in L$ as follows:
\begin{itemize}
    \item say $a\sim_0b$ if $a=b$;
    \item if $\gamma=\beta+1$ and $\sim_\beta$ is already recursively defined then say $a\sim_{\beta+1}b$ if the interval $[a,b]$ intersects only finitely many $\sim_\beta$-equivalence classes;
    \item if $\gamma$ is a limit ordinal and $\sim_\beta$ is recursively defined for each $\beta<\gamma$ then say $a\sim_\gamma b$ if $a\sim_\beta b$ for some $\beta<\gamma$. 
\end{itemize}
The equivalence classes for each $\sim_\gamma$ are intervals, so that $\fin^\gamma(L):=L/\sim_\gamma$ is a linear order. The order $\fin^\gamma(L)$ is called the $\gamma^{th}$ \emph{Hausdorff condensation} of $L$.

\begin{rem}\cite[Proposition~4.5]{AGRAWAL2023113639}
If $L\in\unbdd{\dlo}$ then $L\cong\zeta\cdot\fin(L)$.
\end{rem}

In a later section we need the following observation regarding sums of iterated Hausdorff condensations.
\begin{rem}\label{sumderivative}
Given an ordinal $\beta$ and a partition $L=L_1+L_2$, if either $\fin^\beta(L_1)$ is unbounded above or $\fin^\beta(L_2)$ is unbounded below then $\fin^\beta(L)=\fin^\beta(L_1)+\fin^\beta(L_2)$.
\end{rem}

We will also need a couple of facts about ordinals.
\begin{rem}\label{rem7}
    Any ordinal $\alpha$ can be uniquely written as $\alpha = \beta + \bm n$, where $\beta$ is either $\bm{0}$ or a limit ordinal, and $n$ is finite.
\end{rem}
\begin{rem}\label{cor6.2}
    For ordinals $\beta\leq\gamma$, there is a unique ordinal $\delta$ such that $\gamma=\beta+\delta$. We will denote this $\delta$ by $\gamma-\beta$. In fact, for $\beta\leq\alpha\leq\gamma$ we have $(\gamma-\beta) = (\alpha-\beta) + (\gamma-\alpha)$.
\end{rem}

\section{Exponentiable linear orders}\label{sect4}
In this section, we define exponentiation of linear orders and, consequently, exponentiable linear orders, and discuss their basic properties. We also define transitive linear orders and recall Morel's result on the characterization of transitive linear orders.

Recall that a pointed linear order $(L,a)$ is a non-empty linear order $L$ with an element $a\in L$.
\begin{defn}\label{expo}
    Given a pointed linear order $(L_1,a)$ and a linear order $L_2$, we define the \emph{exponential} of $(L_1,a)$ by $L_2$, denoted $(L_1,a)^{L_2}$, to be the set of all functions from $L_2$ to $L_1$ of finite support, i.e., taking value $a$ at all but finitely many elements in $L_2$, equipped with the following order: given distinct $f, g \in (L_1,a)^{L_2}$, say that $f < g$ if and only if, for the greatest $b \in L_2$ with $f(b) \neq g(b)$, we have $f(b) < g(b)$.
\end{defn}

It is possible to write a nice expression for ordinal exponents.
\begin{rem}\label{rem-rep}
Given a pointed linear order $(L,a)$, let the notations $L_{<a}, L_{>a}$ denote the sets $\{ b \in L \mid b < a \}$ and $\{ b \in L \mid a < b \}$ respectively. These sets can be used to find the following expression for $(L,a)^\alpha$ for an ordinal $\alpha$:
$$(L,a)^\alpha\cong \left(\operatornamewithlimits{\sum}\limits_{\beta \in \bm{\alpha}^{*}} (L,a)^{\beta} \cdot L_{<a} \right) + \bm 1 + \left(\operatornamewithlimits{\sum}\limits_{\beta \in \alpha} (L,a)^{\beta} \cdot L_{>a} \right).$$
\end{rem}

Exponentiation of linear orders satisfies the usual properties of exponentiation of real numbers.
\begin{prop}\label{prop 4.3}
    Given a pointed linear order $(L,a)$ and linear orders $L_1$,$L_2$, we have
    \begin{enumerate}
        \item $\expl{(L,a)}{L_1+L_2} \cong \expl{(L,a)}{L_1} \cdot \expl{(L,a)}{L_2}$,
        \item $\expl{(L,a)}{L_1 \cdot L_2} \cong \left(\expl{(L,a)}{L_1}\right)^{L_2}$.
    \end{enumerate}
\end{prop}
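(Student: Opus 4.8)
The plan is to prove both exponential laws by exhibiting explicit order-isomorphisms between the sets of finite-support functions, in each case constructing the bijection directly from the definition of the order in Definition \ref{expo} and then verifying that it respects the colex-style ordering. Throughout, I would fix the base point $a\in L$ and suppress it from the notation where convenient, writing $\mathbf{0}$ for the constant function with value $a$, since that is the distinguished element playing the role of zero in each exponential.

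For part (1), I would first recall that the product $\expl{(L,a)}{L_1}\cdot\expl{(L,a)}{L_2}$ consists of pairs $(f,g)$ with $f\in\expl{(L,a)}{L_1}$ and $g\in\expl{(L,a)}{L_2}$, ordered in reverse-dictionary (colex) order so that the $\expl{(L,a)}{L_2}$-coordinate $g$ takes precedence. The natural map sends such a pair $(f,g)$ to the function $h\colon L_1+L_2\to L$ defined by $h|_{L_1}=f$ and $h|_{L_2}=g$; this is well-defined and of finite support precisely because $f$ and $g$ each have finite support, and it is clearly a bijection with the obvious inverse given by restriction. The heart of the argument is checking that it is order-preserving: given distinct $(f,g)$ and $(f',g')$, one compares the glued functions at their largest point of difference. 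Because every element of $L_2$ sits strictly above every element of $L_1$ in $L_1+L_2$, the greatest index of disagreement lies in $L_2$ whenever $g\neq g'$ (recovering the dominance of the $g$-coordinate in colex order), and lies in $L_1$ exactly when $g=g'$ but $f\neq f'$; matching these two cases against the definition of colex order on the product yields the isomorphism.

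For part (2), the target $\left(\expl{(L,a)}{L_1}\right)^{L_2}$ consists of finite-support functions $\Phi\colon L_2\to \expl{(L,a)}{L_1}$, where the distinguished constant function here is the one sending every element of $L_2$ to the zero function $\mathbf{0}_{L_1}$. To each such $\Phi$ I would associate the function $h\colon L_1\cdot L_2\to L$ defined by $h(x,y)=\Phi(y)(x)$, using that a typical element of the product $L_1\cdot L_2$ is a pair $(x,y)$ with $y$ dominant. Establishing that this is a well-defined finite-support function requires a small double-finiteness bookkeeping step: $\Phi$ is nonzero at only finitely many $y\in L_2$, and for each such $y$ the value $\Phi(y)$ is itself of finite support in $L_1$, so $h$ is nonzero on only finitely many pairs. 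The map is again a bijection by currying/uncurrying. For order-preservation, I would compare two functions $h,h'$ arising from $\Phi,\Phi'$ at the greatest pair $(x,y)$ where they differ: since the colex order on $L_1\cdot L_2$ ranks $(x,y)$ by $y$ first and then $x$, the greatest differing pair has the largest possible $y$ (which is the greatest $y$ with $\Phi(y)\neq\Phi'(y)$), and among those the largest $x$ with $\Phi(y)(x)\neq\Phi'(y)(x)$; this is exactly the recipe the nested exponential order uses to compare $\Phi$ and $\Phi'$.

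The main obstacle I anticipate is not conceptual but notational: in part (2) one must be careful that the \emph{greatest} index of disagreement is used consistently at every level, so that the outer comparison (over $L_2$) and the inner comparison (over $L_1$, within a fixed fibre) interlock correctly with the single colex comparison over $L_1\cdot L_2$. The risk is an off-by-direction error, so I would state explicitly that for fixed $\Phi\neq\Phi'$ the greatest $y_0$ with $\Phi(y_0)\neq\Phi'(y_0)$ governs the outcome, and that within the fibre over $y_0$ the greatest differing $x_0$ governs the sign, then check that $(x_0,y_0)$ is genuinely the greatest differing index in $L_1\cdot L_2$ — this last point uses precisely that $y$ takes precedence over $x$ in the product order. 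Once that alignment is verified, both isomorphisms follow immediately.
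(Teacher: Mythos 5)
Your proof is correct, and it is the standard argument: gluing/restriction along the decomposition $L_1+L_2$ for the first law, and currying/uncurrying for the second, with the order-preservation in each case reduced to locating the greatest index of disagreement. The paper states this proposition without proof (it is essentially the classical fact from Rosenstein's text), so there is no authorial argument to compare against; your write-up supplies exactly the verification the paper leaves implicit, including the two points that actually need care --- that finite support guarantees a greatest differing index exists, and that in the colex order on $L_1\cdot L_2$ the greatest differing pair is found by maximizing the $L_2$-coordinate first.
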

\begin{rem}\label{expminmax}
If $L$ has a least element, say $m$, then for any $L'$, the exponential $(L,m)^{L'}$ has a least element, namely the constant function with value $m$. Dually, if $L$ has a largest element, say $M$, then for any $L'$, $(L,M)^{L'}$ also has a largest element.
\end{rem}

\begin{prop}\label{unbddexp}
Given a pointed linear order $(L_1,a)$ and a linear order $L_2$ without a least element, we have $(L_1,a)^{L_2} \in \mathrm{D}\lo$.
\end{prop}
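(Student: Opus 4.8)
The plan is to verify density directly from Definition \ref{expo}: I will take an arbitrary pair $f<g$ in $(L_1,a)^{L_2}$ and produce an $h$ with $f<h<g$. (If $L_1$ is a singleton the exponential is the single constant function, which is vacuously dense, so I assume $L_1$ has at least two elements.) Let $b$ be the greatest element of $L_2$ at which $f$ and $g$ differ; this exists because the set of such points is nonempty and contained in the finite supports of $f$ and $g$, and by definition of the order $f(b)<g(b)$ while $f$ and $g$ agree at every point strictly above $b$.

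Here the hypothesis on $L_2$ enters: since the supports of $f$ and $g$ together with $b$ form a finite set and $L_2$ has no least element, I can choose $b'\in L_2$ lying strictly below all of them. Then $b'<b$ and $f(b')=g(b')=a$. I will obtain $h$ by altering $f$ or $g$ at the single coordinate $b'$, which automatically preserves finite support since $h$ then differs from a finite-support function at exactly one point.

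Two symmetric constructions are available, and having at least two elements in $L_1$ guarantees that one of them applies. If $a$ is not the greatest element of $L_1$, choose $e>a$ and let $h$ agree with $f$ except that $h(b')=e$; then $h>f$, since their only point of difference is $b'$ where $h(b')=e>a=f(b')$, and $h<g$, since strictly above $b$ the function $h$ equals $f$ and hence $g$, while $h(b)=f(b)<g(b)$, so the greatest point of difference of $h$ and $g$ is $b$. Dually, if $a$ is not the least element of $L_1$, choose $d<a$ and let $h$ agree with $g$ except that $h(b')=d$; the symmetric computation gives $f<h<g$. Since $a$ cannot be simultaneously the least and the greatest element of a set with at least two elements, at least one of the two cases occurs, yielding the required $h$.

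The step I expect to require the most care is precisely this extremal behaviour of the base point $a$ inside $L_1$. The obvious idea of inserting $h$ by modifying the value at the top difference $b$ fails when $g(b)$ covers $f(b)$ in $L_1$, and a single one-sided perturbation at $b'$ fails when $a$ is an endpoint of $L_1$. Splitting into the two dual perturbations at the low coordinate $b'$ is what circumvents both obstructions at once, and it is exactly the point at which the no-least-element hypothesis on $L_2$ is used in an essential way.
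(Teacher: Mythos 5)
Your proposal is correct and follows essentially the same route as the paper: dispose of the singleton base, use finite support together with the absence of a least element in $L_2$ to find a coordinate $b'$ below everything relevant where both functions take the value $a$, and then perturb at $b'$ either upward from $f$ or downward from $g$ according to whether $a$ is extremal in $L_1$. The only cosmetic difference is that you anchor the argument at the greatest point of disagreement while the paper uses the smallest; both serve only to ensure $b'$ lies below every disagreement, so the two arguments coincide in substance.
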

\begin{proof}
    If $L_1=\{a\}$ then the $(L_1,a)^{L_2}$ is a singleton, and hence trivially dense. Thus we assume that $L_1\neq\{a\}$. Let $f_1<f_2$ in $(L_1,a)^{L_2}$. Since $f_1,f_2$ are of finite support, we have smallest $b \in L_2$ with $f_1(b) \neq f_2(b)$. Since $L_2$ does not have a least element, we have some $b'<b$ in $L_2$ satisfying $f_1(b') = f_2(b') = a$.
    
    If $a$ is not the greatest element in $L_1$ then there exist some $a'>a$ in $L_1$. Let $f_3:L_2 \to L_1$ be the function that differs from $f_1$ only at the index $b'$, where it takes value $a'$. Then $f_3 \in (L_1,a)^{L_2}$ and $f_1<f_3<f_2$. 

    On the other hand, when $a$ is the greatest element in $L_1$, then we can choose some $a''<a$ in $L_1$. Let $f_4:L_2\to L_1$ be the function that differs from $f_2$ only at the index $b'$, where it takes value $a''$. Again, $f_4 \in (L_1,a)^{L_2}$ and $f_1<f_4<f_2$, thereby completing the proof that $(L_1,a)^{L_2} \in \mathrm{D}\lo$.
\end{proof}
\begin{cor}\label{cor 4.5}
Given a pointed linear order $(L_1,a)$ and a linear order $L_2$, there exist an ordinal $\alpha$ and a dense linear order $D$ such that $$(L_1,a)^{L_2} \cong (L_1,a)^\alpha \cdot D.$$
\end{cor}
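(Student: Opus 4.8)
The plan is to split the exponent $L_2$ into a well-ordered front part and a tail with no least element, and then read off the desired factorization from the two results above. Concretely, I would write $L_2 \cong \alpha + B$, where $\alpha$ is an ordinal and $B$ is a linear order with no least element, and then compute
\[
(L_1,a)^{L_2} \cong (L_1,a)^{\alpha + B} \cong (L_1,a)^\alpha \cdot (L_1,a)^B
\]
using Proposition \ref{prop 4.3}(1). The second factor $D := (L_1,a)^B$ is dense by Proposition \ref{unbddexp}, since $B$ has no least element, so this is exactly the required decomposition $(L_1,a)^\alpha \cdot D$.

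Everything therefore reduces to producing the decomposition $L_2 \cong \alpha + B$, i.e.\ to exhibiting a \emph{maximal well-ordered initial segment} of $L_2$. First I would let $I$ be the union of all initial segments of $L_2$ that are well-ordered by the induced order. This $I$ is again an initial segment, and I would check that it is itself well-ordered: given a nonempty $S \subseteq I$, pick any $x \in S$; since $x$ lies in some well-ordered initial segment $W$ and $W$ is initial, the set $\{y \in S \mid y \le x\}$ is contained in $W$ and hence has a least element, which one verifies is the least element of $S$. Letting $\alpha$ be the order type of $I$ and $B := L_2 \setminus I$ its complementary final segment, we get $L_2 \cong I + B \cong \alpha + B$.

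It remains to see that $B$ has no least element, and this is where the construction of $I$ pays off: if $B$ had a least element $b_0$, then $I \cup \{b_0\}$ would be a strictly larger well-ordered initial segment of $L_2$, contradicting the maximality of $I$. The only case needing separate attention is $B = \emptyset$ (for instance when $L_2$ is itself well-ordered or empty), in which case $(L_1,a)^{L_2} \cong (L_1,a)^\alpha$ and we may take $D := \bm 1$, which is vacuously dense, using $(L_1,a)^\alpha \cdot \bm 1 \cong (L_1,a)^\alpha$.

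I expect the main obstacle to be precisely the verification that the union $I$ of all well-ordered initial segments is again well-ordered, equivalently that a maximal well-ordered initial segment exists at all. Once this structural fact is in hand, maximality immediately forces the tail $B$ to have no least element, and the factorization follows formally from Propositions \ref{prop 4.3} and \ref{unbddexp}.
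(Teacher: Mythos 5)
Your proposal is correct and follows essentially the same route as the paper: decompose $L_2$ as a well-ordered initial segment plus a tail with no least element, then apply Proposition \ref{prop 4.3}(1) and Proposition \ref{unbddexp}. The only difference is that the paper simply cites this decomposition of $L_2$ as a standard fact, whereas you supply the (correct) verification that the union of all well-ordered initial segments is itself a maximal well-ordered initial segment.
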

\begin{proof}
    Every linear order $L_2$ can be uniquely expressed as $L' + L'',$ where $L'$ is a well-order and $L''$ has no least element. Since every well-order is isomorphic to a unique ordinal, we obtain an ordinal $\alpha$ such that $\alpha \cong L'$. Then Proposition \ref{prop 4.3}(1) yields $(L_1,a)^{L_2} \cong (L_1,a)^\alpha \cdot (L_1,a)^{L''}$. Finally, using Proposition \ref{unbddexp} we conclude $(L_1,a)^{L''} \cong D$ for some dense linear order $D$.
\end{proof}

In general, the order type of the exponential depends on the choice of a base point.
\begin{exmp} 
It follows from Remark \ref{rem-rep} that $(\omega,0)^\omega = \omega^\omega$ while $(\omega,1)^\omega = \operatornamewithlimits{\sum}\limits_{i \in \omega^*} \omega^i + \omega^\omega$. While the former is bounded below, the latter is unbounded, and hence they are not isomorphic.
\end{exmp}
In view of the above example, our focus is on the study of those linear orders in the base whose exponentiation is independent of the choice of base point. In view of Corollary \ref{cor 4.5}, we only ask for independence in the case of ordinal exponents.
\begin{defn}
    A linear order $L$ is said to be \emph{exponentiable} if for every ordinal $\alpha$ and every $a,b \in L$, we have $(L,a)^{\alpha} \cong (L,b)^{\alpha}$. We denote the class of exponentiable linear orders by $\explo$.
\end{defn}
If $L \in \explo$, then the notation $L^{L'}$ denotes the exponentiation of $L$ to the power $L'$ with respect to any choice of a base point in $L$.

\begin{rem}\label{rem8}
    For any linear order $L$ and given $a<b$ in $L$, the identity map yields an isomorphism $(L,a)^{\bm n} \cong (L,b)^{\bm n}$ for every finite $n$.
\end{rem}

Almost all exponentiable linear orders are unbounded.

\begin{prop}\label{expunbdd}
If $L\in\explo$ and $L$ has at least two elements then $L$ is unbounded.
\end{prop}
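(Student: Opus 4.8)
The plan is to show that an exponentiable $L$ with at least two elements can have neither a least nor a greatest element; together these two facts give unboundedness in the sense defined above (no least and no greatest element means every $a$ has some $b<a<c$). The two cases are dual under order reversal, so I would carry out the argument for the least element and indicate that the greatest-element case follows symmetrically, replacing ``least'' by ``greatest'', $L_{<a}$ by $L_{>a}$, and the bottom summand by the top one.

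Suppose toward a contradiction that $L$ has a least element $m$. Since $L$ has at least two elements, fix some $b\in L$ with $m<b$. I would exponentiate by $\alpha=\omega$ and compare the two base points $m$ and $b$; exponentiability forces $(L,m)^\omega\cong (L,b)^\omega$. On one hand, Remark \ref{expminmax} gives that $(L,m)^\omega$ has a least element (the constant function with value $m$). On the other hand, I would read off the structure of $(L,b)^\omega$ from the representation in Remark \ref{rem-rep}: its bottom block is $\sum_{\beta\in\omega^{*}}(L,b)^{\beta}\cdot L_{<b}$.

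The key observation is that this bottom block has no least element. Indeed, since $b$ is not the least element of $L$ we have $L_{<b}\neq\emptyset$, and each factor $(L,b)^{\beta}$ is nonempty (it contains at least the constant function with value $b$), so every summand $(L,b)^{\beta}\cdot L_{<b}$ is a nonempty linear order; as the index set $\omega^{*}$ itself has no least element, neither does the sum. Because this block is the nonempty bottom piece of $(L,b)^\omega$, it follows that $(L,b)^\omega$ has no least element. This contradicts $(L,m)^\omega\cong(L,b)^\omega$, so $L$ has no least element. The dual argument, using the top block $\sum_{\beta\in\omega}(L,b)^{\beta}\cdot L_{>b}$ whose index set $\omega$ has no greatest element, shows that $L$ has no greatest element.

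I expect the only delicate point to be the bookkeeping in reading least and greatest elements off the representation of Remark \ref{rem-rep}: one must check that the relevant extreme block is nonempty (which needs $L_{<b}\neq\emptyset$, equivalently that $b$ is not extremal) and that the reversed index $\omega^{*}$ is precisely what destroys the least element. Once the exponent $\alpha=\omega$ is chosen, the rest is a direct comparison of order types and presents no substantial obstacle.
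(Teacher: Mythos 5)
Your argument is correct and is essentially the paper's own proof: both assume a least (resp.\ greatest) element, invoke Remark \ref{expminmax} to get a least element of $(L,m)^{\alpha}$, and use the representation in Remark \ref{rem-rep} with $L_{<b}\neq\emptyset$ to see that $(L,b)^{\alpha}$ has no least element for an infinite exponent. The only cosmetic difference is that you fix $\alpha=\omega$ while the paper allows any infinite ordinal; the contradiction is identical.
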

\begin{proof}
Suppose $L \in \explo$ and assume that $L$ has at least two elements. If $L$ has a least element, say $a$, then Remark \ref{expminmax} yields that $(L,a)^\alpha$ has a least element for any ordinal $\alpha$. If $b>a$ in $L$ and $\beta$ is an infinite ordinal then $L_{<b} \not\cong \bm 0$. Then thanks to Remark \ref{rem-rep}, we can see that $(L,b)^\beta$ does not have a least element. Hence, we see that $(L,a)^\beta \not\cong (L,b)^\beta$, which gives $L \not\in \explo$. A similar argument works if $L$ has a greatest element. This proves the result. 
\end{proof}

So far the only known exponentiable linear orders are those whose automorphism group acts transitively on them.
\begin{defn}
A linear order $L$ is said to be \emph{transitive} if for any $a,b\in L$, there exists an automorphism $\phi:L\to L$ satisfying $\phi(a) = b$. We denote the class of transitive linear orders by $\tlo$.
\end{defn}

Here is the promised result.
\begin{prop}\cite[Proposition 8.27]{rosenstein1982linear}\label{prop3.5}
$\tlo \subseteq \explo$.
\end{prop}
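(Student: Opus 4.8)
The plan is to exploit transitivity by turning an automorphism of the base into an order-isomorphism between the two exponentials, obtained simply by postcomposition. Fix a transitive linear order $L$, an ordinal $\alpha$, and elements $a,b\in L$; by transitivity choose an automorphism $\phi:L\to L$ with $\phi(a)=b$. I would then define a map $\Phi:(L,a)^\alpha\to(L,b)^\alpha$ by $\Phi(f)=\phi\circ f$ and argue that it is the required isomorphism, thereby witnessing $(L,a)^\alpha\cong(L,b)^\alpha$.

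First I would verify that $\Phi$ is well-defined. If $f\in(L,a)^\alpha$ then $f(\beta)=a$ for all but finitely many $\beta<\alpha$, so $(\phi\circ f)(\beta)=\phi(a)=b$ for all but finitely many $\beta$; hence $\phi\circ f$ has finite support relative to $b$ and lands in $(L,b)^\alpha$. Applying the same construction to $\phi^{-1}$, which satisfies $\phi^{-1}(b)=a$, yields a map $(L,b)^\alpha\to(L,a)^\alpha$ that is a two-sided inverse of $\Phi$, so $\Phi$ is a bijection.

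The substance of the argument is order-preservation. Given distinct $f,g\in(L,a)^\alpha$, injectivity of $\phi$ gives $(\phi\circ f)(\beta)\neq(\phi\circ g)(\beta)$ if and only if $f(\beta)\neq g(\beta)$; thus $\phi\circ f$ and $\phi\circ g$ disagree on exactly the same set of indices as $f$ and $g$, and in particular share the same greatest index $\beta_0$ of disagreement. At $\beta_0$, since $\phi$ is monotone, $f(\beta_0)<g(\beta_0)$ forces $\phi(f(\beta_0))<\phi(g(\beta_0))$, so $f<g$ implies $\Phi(f)<\Phi(g)$. This establishes that $\Phi$ is an order-isomorphism, so $(L,a)^\alpha\cong(L,b)^\alpha$; as $\alpha$, $a$, and $b$ were arbitrary, $L$ is exponentiable.

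There is no serious obstacle in this argument. The only point demanding care is the bookkeeping in the order-preservation step, namely the observation that postcomposition with a bijective monotone map preserves both the location of the greatest disagreement index and the direction of the comparison there; once this is isolated, the rest is immediate.
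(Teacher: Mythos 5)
Your proof is correct and is exactly the standard argument for this fact (the paper itself only cites Rosenstein's Proposition~8.27 without reproducing a proof): postcomposition with an automorphism sending $a$ to $b$ preserves finite support, preserves the (finite, hence greatest-element-admitting) set of disagreement indices, and preserves the comparison at the greatest such index. Nothing further is needed.
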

Question \ref{ques 4.12} asks if the converse of this statement is true.

\begin{rem}\label{transitiveunbdd}
A transitive linear order with at least two elements is unbounded.
\end{rem}

Transitivity of linear orders is preserved under exponentiation.
\begin{rem}
    Given linear orders $L \in \tlo$ and $L' \in \lo$, $L^{L'} \in \tlo$.
\end{rem}
This fact was used by Morel in her following characterization of transitive linear orders.
\begin{thm}\cite{zbMATH03212027}\label{morel}
Given a linear order $L$, we have $L\in\tlo$ if and only if $L\cong\mathbb{Z}^\gamma \cdot D$ for some ordinal $\gamma$ and dense $D\in\tlo$.
\end{thm}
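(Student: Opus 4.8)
The plan is to prove both implications. For the easier ``if'' direction I would first record that the product of two transitive linear orders is again transitive: given $A,B\in\tlo$ and elements $(x_1,y_1),(x_2,y_2)$ of $A\cdot B$, choose an automorphism $\phi$ of $B$ with $\phi(y_1)=y_2$ and an automorphism $\psi$ of $A$ with $\psi(x_1)=x_2$, and check that the map acting as $\psi$ on the fibre over $y_1$, as the identity on the other fibres, and as $\phi$ on the second coordinate, assembles into an automorphism of $A\cdot B$; the key point is that the colex order makes the second coordinate dominant, so between distinct fibres only order-preservation of $\phi$ matters. Combining this with the facts already available---that $\zeta\in\tlo$, that a power $\zeta^\gamma$ of a transitive order is transitive, and the hypothesis $D\in\tlo$---yields $\zeta^\gamma\cdot D\in\tlo$.

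For the converse, suppose $L\in\tlo$ has at least two elements, so that $L$ is unbounded by Remark~\ref{transitiveunbdd}. I would first establish the dichotomy that $L$ is either dense or discrete. Indeed, if some $a$ has an immediate successor $a^+$, then transporting the pair $(a,a^+)$ by automorphisms shows that every element has an immediate successor, and since $a$ is the immediate predecessor of $a^+$ the same reasoning gives every element an immediate predecessor, so $L$ is discrete; otherwise no $a<b$ has empty open interval and $L$ is dense. In the dense case I take $\gamma=\bm 0$ and $D=L$, so assume $L$ is discrete.

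In the discrete case the plan is a transfinite induction showing that $\fin^\gamma(L)\in\tlo$ and $L\cong\zeta^\gamma\cdot\fin^\gamma(L)$ for every $\gamma$ up to the ordinal $\delta$ at which the condensations stabilize. Transitivity of $\fin^\gamma(L)$ descends from $L$, since $\sim_\gamma$ is defined purely order-theoretically and is therefore preserved by automorphisms, which then act transitively on the $\sim_\gamma$-classes. At a successor $\gamma=\beta+1$, provided $\fin^\beta(L)$ is still discrete (it is unbounded, being transitive with at least two elements), the identity $\sim_{\beta+1}$ witnessed by the finiteness of intervals in $\fin^\beta(L)$ gives $\fin^{\beta+1}(L)\cong\fin(\fin^\beta(L))$, and the single-step formula $\fin^\beta(L)\cong\zeta\cdot\fin(\fin^\beta(L))$ together with the inductive identity and $\zeta^\beta\cdot\zeta\cong\zeta^{\beta+1}$ (Proposition~\ref{prop 4.3}(1)) yields $L\cong\zeta^{\beta+1}\cdot\fin^{\beta+1}(L)$. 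The process terminates at $\delta$, where $\fin(\fin^\delta(L))=\fin^\delta(L)$ forces $\fin^\delta(L)$ to have no nontrivial finite interval and hence to be dense (or a single point); this is the required $D:=\fin^\delta(L)\in\tlo$.

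The main obstacle is the limit stage $\gamma=\lambda$. Here I would argue by fibres: the quotient map $L\to\fin^\lambda(L)$ has convex fibres (the $\sim_\lambda$-classes are intervals), all mutually isomorphic by transitivity to a single order $C$, and because the fibres are convex while the colex order on $C\cdot\fin^\lambda(L)$ is dominated by the base coordinate, \emph{any} choice of fibrewise isomorphisms assembles into an isomorphism $L\cong C\cdot\fin^\lambda(L)$ with no coherence needed. It then remains to identify $C\cong\zeta^\lambda$: fixing a point $p$, its $\sim_\lambda$-class is the nested union $\bigcup_{\beta<\lambda}[p]_{\sim_\beta}$, each term isomorphic to $\zeta^\beta$ by the inductive identity, and since every finite-support function $\lambda\to\mathbb Z$ has support bounded below the limit $\lambda$, this directed union of the canonical convex copies of $\zeta^\beta$ is exactly $\zeta^\lambda$. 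Pinning down that these inclusions are the canonical ones and carrying out the direct-limit identification is the delicate part of the argument; everything else reduces to the dichotomy, the descent of transitivity, and the single-step condensation formula already in hand.
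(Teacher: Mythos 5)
First, a point of comparison: the paper does not prove this statement at all --- it is Morel's theorem, imported by citation --- so there is no internal proof to measure yours against. Judged on its own, your outline follows what is essentially Morel's original argument via iterated Hausdorff condensation, and most of it is sound: the product of two transitive orders is indeed transitive (your fibrewise automorphism of $A\cdot B$ works because the colex order lets the second coordinate dominate between distinct fibres), the dense/discrete dichotomy for transitive orders is correct, transitivity descends to each $\fin^\gamma(L)$ because automorphisms preserve the order-theoretically defined relations $\sim_\gamma$, and the successor step correctly combines $\fin^\beta(L)\cong\zeta\cdot\fin^{\beta+1}(L)$ with associativity and Proposition \ref{prop 4.3}(1). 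You should still record why the process terminates (the increasing chain of equivalence relations $\sim_\gamma$ on a fixed set stabilizes below $|L|^+$) and note that at the stabilization ordinal $\delta$ the transitive order $\fin^\delta(L)$ cannot be discrete with more than one point, hence is dense or trivial; both are routine.

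The one genuine gap is the limit stage, and you have located it yourself. Knowing that $[p]_{\sim_\lambda}=\bigcup_{\beta<\lambda}[p]_{\sim_\beta}$ with each term abstractly isomorphic to $\zeta^\beta$ does not by itself identify the union with $\zeta^\lambda$: a directed union is determined by the system of inclusions, not merely by the isomorphism types of its terms, so you must produce isomorphisms $h_\beta:[p]_{\sim_\beta}\to(\mathbb Z,0)^\beta$ compatible with the inclusions, i.e., $h_\gamma$ restricted to $[p]_{\sim_\beta}$ must equal $h_\beta$ followed by the zero-extension embedding $(\mathbb Z,0)^\beta\to(\mathbb Z,0)^\gamma$. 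This can be done: at a successor, $[p]_{\sim_\beta}$ is exactly one fibre of $[p]_{\sim_{\beta+1}}\cong\zeta^\beta\cdot\zeta$ (two points lie in the same $\sim_\beta$-class of $L$ precisely when they lie in the same fibre), and homogeneity of $\zeta$ lets you translate that fibre to the one over $0$ while extending $h_\beta$; at limits you take unions. But this is a transfinite recursion with a coherence condition --- of exactly the kind carried out in the proof of Theorem \ref{main-thm} --- and it must be written down rather than asserted. Until that recursion is in place, the identification of the common fibre $C$ with $\zeta^\lambda$, and hence the limit case of your induction, is unproved.
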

\begin{cor}
    A countable linear order is transitive if and only if its order type is either $\zeta^\gamma$ or $\zeta^\gamma \cdot \eta$ for some countable ordinal $\gamma$.
\end{cor}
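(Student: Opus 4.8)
The plan is to read off the corollary from Morel's characterization (Theorem \ref{morel}) together with Cantor's classification of countable dense unbounded orders. Theorem \ref{morel} already tells us that every transitive $L$ decomposes as $L\cong\mathbb Z^\gamma\cdot D$ for some ordinal $\gamma$ and some dense transitive $D$, so the entire task reduces to determining, under the hypothesis that $L$ is countable, exactly which $\gamma$ and which $D$ can occur.

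First I would check that countability of $L$ forces both $\gamma$ and $D$ to be countable. Since the underlying set of $\mathbb Z^\gamma\cdot D$ is $\mathbb Z^\gamma\times D$, the factor $D$ injects into $L$, so $|D|\leq\aleph_0$. For $\gamma$, I would note that the finite-support functions taking value $\bm 1$ at a single index $\beta\in\gamma$ and $\bm 0$ elsewhere are pairwise distinct elements of $\zeta^\gamma\cong\mathbb Z^\gamma$, giving an injection $\gamma\hookrightarrow\zeta^\gamma\hookrightarrow L$; hence $\gamma$ is countable as well. Next I would classify the countable dense transitive $D$: by Remark \ref{transitiveunbdd} a transitive order with at least two elements is unbounded, so if $|D|\geq 2$ then $D$ is dense and unbounded and therefore $D\cong\eta$ by Cantor's theorem, while the only remaining possibility is $D\cong\bm 1$. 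Substituting these back yields $L\cong\mathbb Z^\gamma\cdot\bm 1\cong\zeta^\gamma$ or $L\cong\mathbb Z^\gamma\cdot\eta=\zeta^\gamma\cdot\eta$, which settles the forward direction.

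For the converse I would simply observe that both $\zeta^\gamma$ and $\zeta^\gamma\cdot\eta$ have the shape $\mathbb Z^\gamma\cdot D$ with $D\in\{\bm 1,\eta\}$ dense and transitive, so Theorem \ref{morel} makes them transitive, and the cardinality count above shows they are countable precisely because $\gamma$ is. There is no serious obstacle here; the one point requiring care is the auxiliary fact that the countable dense transitive orders are exactly $\bm 1$ and $\eta$, and this rests entirely on combining Remark \ref{transitiveunbdd} with Cantor's theorem.
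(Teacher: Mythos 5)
Your proposal is correct and is exactly the intended derivation: the paper states this as an immediate corollary of Theorem \ref{morel} without writing out a proof, and the argument it has in mind is precisely your reduction --- countability of $L$ forces $\gamma$ and $D$ to be countable, and a countable dense transitive $D$ is either $\bm 1$ or, by Remark \ref{transitiveunbdd} and Cantor's theorem, $\eta$. The cardinality checks and the converse via Morel's theorem are carried out correctly.
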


Recall that an endomorphism $\phi:L \to L$ of a linear order $L$ is said to be \emph{inflationary} if $x \leq \phi(x)$ for each $x \in L$. If $L\in\tlo$ and $a\leq b$ in $L$, then we can choose an inflationary automorphism that takes $a$ to $b$.
\begin{prop}\label{prop-weakinf}
Suppose $L \in \tlo$ and $a\leq b$ in $L$. Then there is an inflationary automorphism $\phi:L \to L$ such that $\phi(a) = b$.
\end{prop}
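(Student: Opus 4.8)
The plan is to start from an arbitrary automorphism witnessing transitivity and to modify it surgically so that it becomes inflationary while still sending $a$ to $b$. If $a=b$ the identity already works, so assume $a<b$. By transitivity, choose an automorphism $\psi\colon L\to L$ with $\psi(a)=b$. This $\psi$ need not be inflationary, but the key observation is that it is \emph{automatically} inflationary on the part of $L$ swept out by the forward and backward iterates of $a$; outside that part I would simply replace $\psi$ by the identity.

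Concretely, I would consider the orbit $\{\psi^n(a):n\in\mathbb{Z}\}$. Since $\psi$ is monotone and $\psi(a)=b>a$, this is a strictly increasing $\mathbb{Z}$-indexed chain $\cdots<\psi^{-1}(a)<a<b<\psi^2(a)<\cdots$. Let $H$ denote its convex hull, i.e. the set of $x\in L$ for which there exist $m\le n$ with $\psi^m(a)\le x\le\psi^n(a)$. Because $\psi$ is an order-automorphism carrying the orbit bijectively onto itself, it carries the convex hull of the orbit onto the convex hull of the orbit, so $H$ is a $\psi$-invariant interval and $\psi$ restricts to a bijection of $H$ and of its complement $L\setminus H$. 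I then define $\theta\colon L\to L$ by $\theta:=\psi$ on $H$ and $\theta:=\mathrm{id}$ on $L\setminus H$.

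The crucial step is to check that $\psi$ is inflationary on $H$. Given $x\in H$, the set $\{k\in\mathbb{Z}:\psi^k(a)\le x\}$ is nonempty and bounded above by the definition of the hull, so it has a greatest element $k_0$, whence $\psi^{k_0}(a)\le x<\psi^{k_0+1}(a)$. Applying the monotone map $\psi$ yields $\psi(x)\ge\psi^{k_0+1}(a)>x$, so $\psi(x)>x$. The remaining verification that $\theta$ is an order-automorphism is then routine: it is a bijection since $\psi$ permutes $H$ and the identity permutes $L\setminus H$, and it preserves order because any point of $L\setminus H$ lies entirely below or entirely above the convex set $H$ (if $x\notin H$ yet $z\le x$ for some $z\in H$, convexity would force every element between $z$ and $x$, hence $x$ itself, into $H$). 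As $\theta$ agrees with $\psi$ on $H\ni a$, we obtain $\theta(a)=\psi(a)=b$, and $\theta$ is inflationary because it is the identity off $H$ and strictly inflationary on $H$.

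The main obstacle I anticipate is exactly the inflationary-on-$H$ computation: one must argue that every element of $H$ sits in a unique half-open block $[\psi^{k}(a),\psi^{k+1}(a))$, which is where the hypothesis $a<b$ (forcing the orbit to be a strictly increasing chain) is genuinely used, and then observe that $\psi$ pushes each block into the next one. A secondary point requiring care is confirming that $H$ is truly $\psi$-invariant and that any supremum or infimum of the orbit which happens to exist in $L$ does \emph{not} lie in $H$ (such a point is a fixed point of $\psi$ and sits on the boundary $\partial H$), so that gluing $\psi|_H$ with the identity across the boundary is both consistent and order-preserving.
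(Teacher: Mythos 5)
Your proposal is correct and follows essentially the same route as the paper: both take an arbitrary automorphism $\psi$ with $\psi(a)=b$, form the convex hull of the $\mathbb{Z}$-orbit $\{\psi^n(a)\}$ (the paper's interval $I_{a,b}$, which coincides with your $H$ since the orbit is a strictly increasing chain without endpoints), observe that this hull is $\psi$-invariant, and glue $\psi$ on the hull with the identity on its complement. Your verification that $\psi$ is inflationary on $H$ via the blocks $[\psi^{k}(a),\psi^{k+1}(a))$ is exactly the detail the paper leaves implicit.
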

\begin{proof}
The conclusion clearly holds when $L=\bm1$, and hence thanks to Remark \ref{transitiveunbdd}, we may assume that $L \in \unbdd{\tlo}$. Suppose $a\leq b$ in $L$. If $a=b$ then the identity map is inflationary and takes $a$ to $b$. So we may assume that $a<b$. By transitivity there exist an automorphism $\phi$ of $L$ such that $\phi(a)=b$. Define $I_{a,b} := \{ c \mid \exists n \in \mathbb N \text{ with } \phi^{-n}(a) < c < \phi^{n}(a) \}$. It is readily seen that $I_{a,b}$ is an interval of $L$ satisfying $\phi(I_{a,b}) = I_{a,b}$. Then there is a partition $L = I_{-} + I_{a,b} + I_{+}$. The modified automorphism $\phi'$ of $L$ that takes $\phi$ on $I_{a,b}$ and identity elsewhere is inflationary and maps $a$ to $b$.
\end{proof}

% \begin{proof}
%     Since $L$ is transitive, let $f_{a,b}$ denote the automorphisms (bijective monotone maps) on $L$ mapping $a$ to $b$. Fix any linear order $L_1$. We need to prove that for any $f_1, f_2 \in \expl{L}{L_1}$, there exists an automorphism $F$ in $\expl{L}{L_1}$ satisfying $F(f_1) = f_2$.
% Let $F:\expl{L}{L_1} \to \expl{L}{L_1}$ such that $F(f):L_1 \to L$ with $F(f)(a) = f_{f_1(a),f_2(a)}(f(a))$ for $a \in L_1, f \in \expl{L}{L_1}$. Since $f_{f_1(a),f_2(a)}$ is an automorphism on $L$ for every $a \in L$,thus $F$ is a bijection. 

% To prove that $F$ is order-preserving, let $g_1, g_2 \in \expl{L}{L_1}$, with $g_1 < g_2$. Thus, there exists greatest $a \in L_1$ with $g_1(a) \neq g_2(a)$, satisfying $g_1(a) < g_2(a)$. Since $f_{f_1(a),f_2(a)}$ are bijective monotone maps for every $a \in L_1$, this is the greatest $a$ with $f_{f_1(a),f_2(a)}(g_1(a)) \neq f_{f_1(a),f_2(a)}(g_2(a))$, satisfying $f_{f_1(a),f_2(a)}(g_1(a)) < f_{f_1(a),f_2(a)}(g_2(a))$. Hence, $F(g_1) < F(g_2)$. Thus, $F$ is an automorphism on $\expl{L}{L_1}$.
% \end{proof}

\section{Cyclically transitive linear orders}\label{sect5}
In this section, we define an extension of the class $\tlo$, which we call the class of \emph{cyclically transitive linear orders}. We show that this class is closely related to the class of transitive cyclic orders (Theorem \ref{cyctrans}) that was introduced by Droste et al. \cite{droste1995periodic}. Thanks to Campero-Arena and Truss' classification of countable transitive cyclic orders (Theorem \ref{CT}), we give a classification of all countable cyclically transitive linear orders (Corollary \ref{CTcor}). We prove some closure properties for subclasses of the class $\rtlo$ under products (Theorems \ref{prop5.5},\ref{prop5.6}) and iterated Hausdorff condensations (Theorem \ref{ltlo-condens}).
\begin{defn}
    A linear order $L$ is said to be \emph{cyclically transitive} if for any $ a\leq b \text{ in } L $ there exist partitions $L=L_1+L_2= L_2'+L_1'$ with $a \in L_1, b \in L_1'$, and isomorphisms $F_1:L_1 \to L_1',F_2:L_2 \to L_2'$ satisfying $F_1(a) = b$. We denote the class of cyclically transitive linear orders by $\rtlo$.
\end{defn}

\begin{rem}\label{remtrivial}
    In the definition of cyclic transitivity, if $a=b$, then we can choose $L_1=L'_1:=L$, $L_2=L'_2:=\bm0$, $F_1$ to be the identity on $L$, and $F_2$ to be the empty map.
\end{rem}

\begin{rem}\label{rem3}
    We have $\tlo\subseteq\rtlo$ since for $L\in\tlo$ we can choose $L_1=L'_1:=L$ and $L_2=L'_2:=\bm{0}$ in the above definition. However, the reverse inclusion does not hold for $\bm{n}\in\rtlo\setminus\tlo$ for any $n\in\mathbb{N}$.
\end{rem}
\begin{rem}\label{rem2}
    If $L \in \rtlo$ is unbounded above (resp. below) then for any $a\leq b$ in $L$, the linear orders $L_1,L_1', L_2, L_2'$ guaranteed by the definition of $\rtlo$ are unbounded above (resp. below).
\end{rem}

The word ``cyclic'' in the definition of cyclic transitivity refers to its close connection with the notion of cyclic orders introduced by Droste et al.
\begin{defn}\cite{droste1995periodic}
Given a set $X$ and a ternary relation $R$ on $X$, say that the pair $(X,R)$ is a \emph{cyclic ordering} if
\begin{itemize}
    \item for all $x,y,z\in X$, $R(x,y,z)\iff R(y,z,x)\iff R(z,x,y)$;
    \item for all $a\in X$, if we define a binary relation $<_a$ on $X\setminus\{a\}$ by the rule: $x<_a y\iff R(a,x,y)$, then $<_a$ is a strict linear order.
\end{itemize}

Say that a cyclic ordering $(X,R)$ is \emph{transitive} if for any $a,b\in X$ there is an automorphism $\phi$ of $(X,R)$ satisfying $\phi(a)=b$, where an automorphism of $(X,R)$ is a bijective function $\phi:X\to X$ satisfying $R(x,y,z)\iff R(\phi(x),\phi(y),\phi(z))$ for all $x,y,z\in X$.
\end{defn}

Gluing the ends of a linear order $L$, we obtain a cyclic order $\check L:=(L,R)$ as per the following rule: for $a,b,c\in L$, $R(a,b,c)$ if and only if one of $a<b<c$, $b<c<a$ or $c<a<b$ holds.

The next result justifies the choice of the phrase ``cyclically transitive".
\begin{thm}\label{cyctrans}
A linear order $L$ is cyclically transitive if and only if the associated cyclic order $\check L$ is transitive.
\end{thm}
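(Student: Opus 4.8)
The plan is to reduce the cyclic-automorphism condition to a statement about the ``cut-open'' linear orders $<_p$. For a point $p\in L$, the strict linear order $<_p$ on $L\setminus\{p\}$ from the definition of a cyclic ordering is order-isomorphic to $L_{>p}+L_{<p}$. The main tool I would establish first is the reformulation: a bijection $\phi:L\to L$ is an automorphism of $\check L$ if and only if, for some (equivalently, every) $p\in L$, $\phi$ restricts to an order isomorphism $(L\setminus\{p\},<_p)\to(L\setminus\{\phi(p)\},<_{\phi(p)})$. The nontrivial implication is that checking this at a single point suffices; this holds because $R$ can be recovered from any one $<_p$: for distinct $x,y,z\neq p$ one has $R(x,y,z)$ iff $x,y,z$ occur in the cyclic order induced by the linear order $<_p$, while $R(x,y,p)\iff x<_p y$ by cyclic symmetry. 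Hence an order isomorphism between $<_p$ and $<_{\phi(p)}$ sending $p$ to $\phi(p)$ automatically preserves $R$ on all triples.

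For the forward direction, suppose $L\in\rtlo$. It suffices to produce, for each pair with $a\leq b$, an automorphism of $\check L$ sending $a$ to $b$, since inverses dispose of the case $a>b$. Take the partitions $L=L_1+L_2=L_2'+L_1'$ and the isomorphisms $F_1,F_2$ from the definition, and glue them into $\phi:=F_1\cup F_2$. Since $F_1(L_1)=L_1'$, $F_2(L_2)=L_2'$ and $L=L_2'+L_1'$, the map $\phi$ is a bijection with $\phi(a)=b$. Applying the reformulation at $p=a$: because $a\in L_1$ and $b\in L_1'$, the order $<_a$ decomposes as $(L_1)_{>a}+L_2+(L_1)_{<a}$ and $<_b$ as $(L_1')_{>b}+L_2'+(L_1')_{<b}$, and $\phi$ carries these three consecutive blocks isomorphically onto the corresponding three (the outer two via $F_1$, using $F_1(a)=b$, the middle via $F_2$). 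Thus $\phi$ is an order isomorphism $<_a\to<_b$, hence an automorphism of $\check L$ sending $a$ to $b$, so $\check L$ is transitive.

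For the converse, assume $\check L$ is transitive and fix $a\leq b$ (the case $a=b$ being immediate by Remark \ref{remtrivial}). An automorphism $\phi$ with $\phi(a)=b$ gives, via the reformulation, an order isomorphism $\Phi:L_{>a}+L_{<a}\to L_{>b}+L_{<b}$. Writing $P:=\Phi^{-1}(L_{>b})$, an initial segment of $L_{>a}+L_{<a}$, I would read off the data in the case $P\subseteq L_{>a}$: then $T:=P$ is an initial segment of $L_{>a}$ with $\Phi|_T:T\cong L_{>b}$, the set $L_1:=L_{<a}+\{a\}+T$ is an initial segment containing $a$ with complementary final segment $L_2:=L_{>a}\setminus T$, and $\phi$ restricts to order isomorphisms of $L_1$ and $L_2$ onto a final segment $L_1'\ni b$ and its complementary initial segment $L_2'$, giving $F_1:=\phi|_{L_1}$ with $F_1(a)=b$ and $F_2:=\phi|_{L_2}$.

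The step I expect to be the main obstacle is the complementary case, where $\Phi$ ``over-rotates'', i.e.\ $P\supsetneq L_{>a}$ (equivalently some $x<a$ has $\Phi(x)>b$); this genuinely occurs, already for $L\cong\eta$. Here $\Phi$ no longer restricts to segment-preserving maps inside the correct blocks, so restrictions of $\phi$ cannot be used directly. Concretely, writing $M:=\{x\mid a<x<b\}$, $U:=L_{>b}$, $D:=L_{<a}$, the statement at $(a,b)$ is equivalent to the existence of a linear order $R$ with $U+R\cong M+\bm 1+U$ and $R+D\cong D+\bm 1+M$, whereas transitivity of $\check L$ supplies only the single isomorphism $M+\bm 1+U+D\cong U+D+\bm 1+M$; extracting the common ``middle'' $R\cong L_2\cong L_2'$ is the crux. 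I would resolve it by adjusting the automorphism, replacing $\phi$ by one whose induced cut $P$ lands inside $L_{>a}$, or, failing a clean such choice, by a direct back-and-forth construction of $R$ from the given isomorphism, after which the first case applies.
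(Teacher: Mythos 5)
Your forward direction is correct and is essentially the paper's argument (glue $F_1\cup F_2$ and check it preserves $R$; your single-point criterion via $<_p$ is a clean way to organize that check). Your converse is where the problem lies. The case split on $P:=\Phi^{-1}(L_{>b})$ is the right analysis, and your first case ($P\subseteq L_{>a}$) is handled correctly; it corresponds exactly to what the paper does, namely extracting the partition $L=L_1+L_2$ with $L_1:=\{x\mid a\leq x\iff b\leq\phi(x)\}$ and taking restrictions of $\phi$. (Indeed, the paper asserts without detail that this $L_1$ is an initial segment; that assertion is true precisely in your first case and fails in your second, as your $\eta$ example shows, so the obstacle you flag is real and not an artifact of your setup.)

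The gap is that the second case is never actually proved. You correctly reduce it to extracting, from the single isomorphism $M+\bm 1+U+D\cong U+D+\bm 1+M$, a common middle order $R$ with $U+R\cong M+\bm 1+U$ and $R+D\cong D+\bm 1+M$, and you correctly observe this does not follow formally. But the two remedies you sketch are not carried out and neither is clearly viable as stated. Replacing $\phi$ by an automorphism whose cut lands inside $L_{>a}$ is essentially equivalent to the conclusion itself: by your own first case, such an automorphism exists if and only if the required partition data exists, so without an explicit construction this is circular. A back-and-forth construction of $R$ is not described at all, and back-and-forth arguments generally require countability, whereas the theorem is stated for arbitrary linear orders. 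What is actually needed in the second case is an argument that the data you do obtain there --- a nonempty initial segment $A\subseteq L_{<a}$ isomorphic to a final segment $A'\subseteq L_{>b}$, together with an isomorphism of the complementary pieces carrying $a$ to $b$ --- can be reassembled into a partition of the form demanded by the definition of $\rtlo$ (with $a$ in the initial block of the first partition and $b$ in the final block of the second). Identifying the hard case is valuable, but as written the proof of the converse is incomplete.
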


\begin{proof}
Suppose $L\in\rtlo$ and $a,b$ are any two elements of $L$. We give the construction of an automorphism $\phi$ of $\check L$ mapping $a$ to $b$. Without loss of generality, we may assume that $a\leq b$ in $L$. Cyclic transitivity of $L$ then yields partitions $L=L_1+L_2=L'_2+L'_1$ with $a\in L_1,b\in L'_1$, and isomorphisms $F_1:L_1\to L'_1$, $F_2:L_2\to L'_2$ satisfying $F_1(a)=b$. Then it is straightforward to verify that the map $\phi:L\to L$ defined as $\phi(x):=\begin{cases}F_1(x)& x\in L_1,\\F_2(x)& x\in L_2,\end{cases}$ is a required automorphism of $\check L$. 

Conversely, suppose that $\check L=(L,R)$ is transitive and $a\leq b$ in $L$. Then there is some automorphism $\phi$ of $\check L$ such that $\phi(a)=b$. Let $L_1:=\{x\in L\mid a\leq x\iff\phi(a)\leq\phi(x)\}$. It is routine to verify that if $x\leq y$ and $y\in L_1$ then $x\in L_1$. Similarly, we see that if $\phi(x)\leq z$ for some $x,z\in L$ then $z=\phi(y)$ for some $x\leq y$. Thus there exist partitions $L=L_1+L_2=\phi(L_2)+\phi(L_1)$ such that $a\in L_1$ and $b\in\phi(L_1)$. Finally, the restrictions of $\phi$ to $L_1$ and $L_2$ are required isomorphisms such that the former maps $a$ to $b$.
\end{proof}

We note some immediate consequences of the above equivalence.
\begin{cor}\label{ltloreversal}
The class $\rtlo$ is closed under order reversals.
\end{cor}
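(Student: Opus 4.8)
The plan is to deduce this immediately from the characterization in Theorem \ref{cyctrans}, which reduces the claim to a statement about reversing cyclic orders. Given a cyclic ordering $(X,R)$, define its \emph{reverse} $(X,R^{\mathrm{rev}})$ by $R^{\mathrm{rev}}(x,y,z) :\iff R(z,y,x)$; one checks quickly that this is again a cyclic ordering. The first step is the purely combinatorial observation that the cyclic order associated to the reversed linear order is the reverse of the associated cyclic order: unwinding the definition of $\check{L^*}$ using $x <_{L^*} y \iff y < x$, I would verify that the ternary relation $R_{L^*}$ of $\check{L^*}$ satisfies $R_{L^*}(a,b,c) \iff R_L(c,b,a)$ for all $a,b,c$, i.e. $\check{L^*} = \check{L}^{\,\mathrm{rev}}$. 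Concretely, each of the three rotations $a<_{L^*}b<_{L^*}c$, $b<_{L^*}c<_{L^*}a$, $c<_{L^*}a<_{L^*}b$ translates under order reversal into one of the three disjuncts defining $R_L(c,b,a)$.

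The second step is to show that transitivity of a cyclic ordering is preserved under reversal. This is the crux, but it is short: if $\phi$ is an automorphism of $(X,R)$, then the same map $\phi$ is an automorphism of $(X,R^{\mathrm{rev}})$, since $R^{\mathrm{rev}}(x,y,z) \iff R(z,y,x) \iff R(\phi(z),\phi(y),\phi(x)) \iff R^{\mathrm{rev}}(\phi(x),\phi(y),\phi(z))$. In particular, for given $a,b$ an automorphism of $(X,R)$ sending $a$ to $b$ is simultaneously an automorphism of $(X,R^{\mathrm{rev}})$ sending $a$ to $b$, so $(X,R^{\mathrm{rev}})$ is transitive whenever $(X,R)$ is.

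Combining the two steps: if $L \in \rtlo$ then $\check{L}$ is transitive by Theorem \ref{cyctrans}, hence $\check{L}^{\,\mathrm{rev}} = \check{L^*}$ is transitive, and applying Theorem \ref{cyctrans} again gives $L^* \in \rtlo$. I expect the only mild obstacle to be the bookkeeping in the first step—keeping track of how the three cyclic rotations of $R$ transform under order reversal—but this is a finite case check.

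Alternatively, the corollary can be proved directly from the definition of $\rtlo$ without passing through cyclic orders: given $a \le_{L^*} b$, i.e. $b \le_L a$, one feeds the pair $b \le_L a$ into the definition of cyclic transitivity for $L$ to obtain partitions $L = L_1 + L_2 = L_2' + L_1'$ with $b \in L_1$, $a \in L_1'$ and isomorphisms $F_1:L_1 \to L_1'$, $F_2:L_2 \to L_2'$ with $F_1(b)=a$. Reversing these pieces yields partitions of $L^*$ placing $a$ first and $b$ last, and the inverses $F_1^{-1}, F_2^{-1}$ (viewed between the reversed orders, where they remain isomorphisms) serve as the witnessing maps with $F_1^{-1}(a)=b$. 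I would present the cyclic-order argument as the main proof, since it is the cleaner consequence of Theorem \ref{cyctrans}.
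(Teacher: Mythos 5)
Your main argument is exactly the paper's proof: the paper defines $R^*:=\{(z,y,x)\mid(x,y,z)\in R\}$, observes that being a cyclic order and being transitive are preserved under this reversal, notes $(\check{L})^*=(L^*)\check{\ }$, and concludes via Theorem \ref{cyctrans}. Your proposal is correct and takes essentially the same approach (the direct alternative you sketch at the end is also sound, but the paper does not use it).
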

\begin{proof}
Note that $(X,R)$ is a cyclic order if and only if $(X,R^*)$ is a cyclic order, where $R^*:=\{(z,y,x)\in X^3\mid(x,y,z)\in R\}$. Clearly, $(X,R)$ is transitive if and only if $(X,R^*)$ is transitive. Finally, note that for any linear order $L$, we have $(\check{L})^*=(L^*)\check{}$ to complete the proof.
\end{proof}

The next definition captures the idea of cutting the cyclic order $\check L$ associated to a linear order $L$ at an arbitrary location.
\begin{defn}
Say that linear orders $L,L'$ are \emph{cyclically equivalent}, written $L\sim_c L'$, if there are partitions $L=L_1+L_2, L'=L'_2+L'_1$ and order isomorphisms $F_1:L_1\to L'_1, F_2:L_2\to L'_2$.
\end{defn}
It is easy to verify that $\sim_c$ is indeed an equivalence relation on the class $\lo$ of linear orders.

\begin{cor}\label{prop-ltlogen}
The class $\rtlo$ is closed under cyclically equivalent linear orders.
\end{cor}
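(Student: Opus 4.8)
The plan is to prove Corollary~\ref{prop-ltlogen} by transporting the problem to the cyclic-order side via Theorem~\ref{cyctrans}, where the statement reduces to an essentially tautological observation. First I would recall what needs to be shown: if $L\in\rtlo$ and $L\sim_c L'$, then $L'\in\rtlo$. By Theorem~\ref{cyctrans}, this is equivalent to showing that transitivity of the cyclic order $\check L$ implies transitivity of $\check{L'}$. So the single key step is to establish that cyclic equivalence of linear orders is exactly the relation of having isomorphic associated cyclic orders, i.e., $L\sim_c L'$ if and only if $\check L\cong\check{L'}$ as cyclic orders.

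To carry this out, I would first prove the forward direction of that key lemma. Given partitions $L=L_1+L_2$ and $L'=L'_2+L'_1$ with isomorphisms $F_1:L_1\to L'_1$ and $F_2:L_2\to L'_2$, I would define $\psi:L\to L'$ by $\psi(x):=F_1(x)$ for $x\in L_1$ and $\psi(x):=F_2(x)$ for $x\in L_2$. This is a bijection, and the routine verification is that $\psi$ respects the ternary relation, i.e., $R(a,b,c)\iff R(\psi(a),\psi(b),\psi(c))$: since $\psi$ is built from the two order-isomorphisms $F_1,F_2$ after cutting $L$ between $L_1$ and $L_2$ and regluing $L'$ in the cyclically shifted order $L'_2+L'_1$, betweenness in the cyclic sense is preserved. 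This is precisely the content of the map $\phi$ already constructed in the proof of Theorem~\ref{cyctrans}, so I would cite that verification rather than repeat it.

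Once $\check L\cong\check{L'}$ is established, the conclusion is immediate: isomorphic cyclic orders are transitive together, since any isomorphism conjugates the automorphism group of $\check L$ onto that of $\check{L'}$, carrying an automorphism fixing the required point-pair to one on the other side. Combining, if $L\in\rtlo$ then $\check L$ is transitive by Theorem~\ref{cyctrans}, hence $\check{L'}$ is transitive, hence $L'\in\rtlo$ by Theorem~\ref{cyctrans} again.

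I do not expect a genuine obstacle here; the corollary is a soft consequence of Theorem~\ref{cyctrans} together with the observation that $\sim_c$ is exactly cyclic-order isomorphism. The only point requiring slight care is the bookkeeping in the ternary-relation check for $\psi$, ensuring that the cyclic shift introduced by writing $L'$ as $L'_2+L'_1$ rather than $L'_1+L'_2$ is handled correctly; but this is the same computation already done in Theorem~\ref{cyctrans}, so the cleanest exposition simply invokes that the map defined there is an isomorphism of cyclic orders and notes that transitivity is an isomorphism invariant.
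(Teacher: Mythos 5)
Your proposal is correct and follows exactly the paper's route: the paper's proof is the one-line observation that $L\sim_c L'$ implies $\check L\cong\check{L'}$, after which Theorem~\ref{cyctrans} gives the result. Your additional detail (the explicit gluing map $\psi$ and the remark that transitivity is an isomorphism invariant) just fleshes out what the paper leaves implicit.
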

\begin{proof}
If $L\sim_c L'$ then $\check L\cong\check{L'}$, and hence Theorem \ref{cyctrans} yields the result.
\end{proof}

Campero-Arena and Truss \cite{campero20091} classified all countable transitive cyclic orders.
\begin{thm}\cite[Theorem~2.12(i)]{campero20091}\label{CT}
Suppose $(X,R)$ is a countable cyclic order. Then $(X,R)$ is transitive if and only if $(X,R)$ is isomorphic to $\check L$, where the order type of $L$ is either $\zeta^\alpha\cdot\bm n$ or $\zeta^\alpha\cdot\eta$ for some countable ordinal $\alpha$ and finite $n$.
\end{thm}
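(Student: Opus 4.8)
The plan is to pass from cyclic orders to linear orders and then run a Hausdorff-condensation argument. First I would record the standard correspondence: every countable cyclic order is isomorphic to $\check L$ for some countable $L$ (cut $(X,R)$ at an arbitrary point $p$ and linearise the remaining elements by $<_p$, placing $p$ at the bottom), and that cutting and regluing are mutually inverse, so that $\check L\cong\check{L'}$ precisely when $L\sim_c L'$. Combined with Theorem \ref{cyctrans}, the statement becomes purely order-theoretic: a countable $L\in\rtlo$ is cyclically equivalent to an order of type $\zeta^\alpha\cdot\bm n$ or $\zeta^\alpha\cdot\eta$, and conversely every such order lies in $\rtlo$.

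The converse direction is short. Since $\zeta^\alpha\cdot\eta$ is transitive by the countable case of Morel's Theorem \ref{morel}, it lies in $\rtlo$ by Remark \ref{rem3}. For $\zeta^\alpha\cdot\bm n$, which is generally not transitive, I would combine $\bm n\in\rtlo$ (Remark \ref{rem3}) with the closure of $\rtlo$ under left multiplication by transitive orders (Theorem \ref{prop5.5}) applied to the transitive factor $\zeta^\alpha$; alternatively the defining partitions and the isomorphism $F_1$ realising the cyclic shift can be written down by hand.

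For the forward direction I would study a representative $L\in\rtlo$ through its iterated condensations $\fin^\gamma(L)$. If $L$ is finite then $L\cong\bm n\cong\zeta^0\cdot\bm n$, so assume $L$ is infinite; using Corollary \ref{ltloreversal}, Remark \ref{rem2} and the freedom to re-cut $\check L$, I would first pass to an unbounded representative, the key point being that an infinite transitive cyclic order always possesses a gap at which to cut. The transitivity of $\check L$ then forces a clean dichotomy: either no two points are adjacent, in which case $L$ is a countable dense unbounded order and hence equals $\eta$ by Cantor's theorem, yielding $\zeta^0\cdot\eta$; or adjacencies exist and, by homogeneity, $L$ is discrete. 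In the discrete case $L\in\unbdd{\dlo}$, so the identity $L\cong\zeta\cdot\fin(L)$ peels off a factor of $\zeta$ while Theorem \ref{ltlo-condens} keeps $\fin(L)\in\rtlo$. Letting $\alpha$ be the least ordinal at which $\fin^\alpha(L)$ becomes finite or dense---countable and well-defined because each condensation strictly simplifies $L$---a transfinite induction should give $L\cong\zeta^\alpha\cdot\fin^\alpha(L)$, with $\fin^\alpha(L)$ equal to $\bm n$ or, again by Cantor's theorem, to $\eta$.

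The main obstacle is precisely this structure theorem for infinite $L\in\rtlo$, and I expect three points to require genuine care. The first is the reduction to an unbounded representative, i.e. the existence of a gap in an infinite transitive cyclic order and the verification that cyclic transitivity survives re-cutting (here Remark \ref{rem2} and cyclic equivalence do the work). The second is the transfinite accumulation $L\cong\zeta^\gamma\cdot\fin^\gamma(L)$: the single-step identity must be iterated through successor and limit ordinals, which requires checking that every intermediate condensation $\fin^\gamma(L)$ with $\gamma<\alpha$ remains unbounded and discrete, so that Theorem \ref{ltlo-condens} and the peeling identity keep applying. The third is identifying the terminal condensation unambiguously as $\bm n$ or $\eta$, which rests on the preservation of unboundedness under condensation together with Cantor's characterisation of countable dense unbounded orders. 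All remaining ingredients are already available in the results developed above.
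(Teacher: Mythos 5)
The first thing to note is that the paper offers no proof of this statement at all: it is imported verbatim from Campero-Arena and Truss, so there is no internal argument to compare yours against, and I can only assess your sketch on its own terms. Your reduction to linear orders via Theorem \ref{cyctrans} and the observation that $\check L\cong\check{L'}$ iff $L\sim_c L'$ are fine, and the converse direction (every $\zeta^\alpha\cdot\bm n$ and $\zeta^\alpha\cdot\eta$ lies in $\rtlo$, via Theorem \ref{morel}, Remark \ref{rem3} and Theorem \ref{prop5.5}) is correct and complete.

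The forward direction, however, contains a step that fails as stated. You propose to iterate $L\cong\zeta\cdot\fin(L)$ up to the least $\alpha$ with $\fin^\alpha(L)$ finite or dense and conclude $L\cong\zeta^\alpha\cdot\fin^\alpha(L)$, flagging ``every intermediate condensation remains unbounded and discrete'' as something to check. It is not a check that succeeds: take $L=\zeta\cdot\omega+\zeta\cdot\omega^*$, which is discrete, unbounded, and in $\rtlo$ because $L\sim_c\zeta\cdot(\omega^*+\omega)=\zeta^2$ and $\rtlo$ is closed under cyclic equivalence (Corollary \ref{prop-ltlogen}). Here $\fin(L)=\omega+\omega^*$ is bounded, so the peeling identity cannot be applied a second time; worse, $\fin^2(L)=\bm2$, whereas the correct classification of $L$ is $\zeta^2\cdot\bm1$, not $\zeta^2\cdot\bm2$. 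So a single initial re-cut to an unbounded representative does not suffice, and the terminal condensation of your chosen linear cut is not the invariant from which $n$ can be read off. To close the induction you must either re-cut to an unbounded cyclically equivalent representative at every stage (and then show that the limit stages and the final value of $n$ are independent of all these choices), or condense the cyclic order $\check L$ itself rather than a linear cut of it --- essentially the route hinted at in the remark following Theorem \ref{ltlo-condens}. Without one of these additional ideas the argument, run on the example above, produces a wrong answer.
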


This theorem gives us a classification of all countable cyclically transitive linear orders.
\begin{cor}\label{CTcor}
Suppose $L\in\rtlo$ is countable. Then $L\sim_c L'$, where the order type of $L'$ is either $\zeta^\alpha\cdot\bm n$ or $\zeta^\alpha\cdot\eta$ for some countable ordinal $\alpha$ and finite $n$.
\end{cor}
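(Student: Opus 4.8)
The plan is to combine Corollary~\ref{prop-ltlogen} with Theorem~\ref{CT} via the bridge already established in Theorem~\ref{cyctrans}. Since $L\in\rtlo$, Theorem~\ref{cyctrans} tells us that the associated cyclic order $\check L$ is transitive. Moreover, $L$ is countable, so $\check L=(L,R)$ is a countable transitive cyclic order. This is precisely the hypothesis of the Campero-Arena--Truss classification.

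Next I would apply Theorem~\ref{CT} directly to $\check L$. It yields an isomorphism $\check L\cong\check{L'}$, where the order type of $L'$ is one of the two listed normal forms, namely $\zeta^\alpha\cdot\bm n$ or $\zeta^\alpha\cdot\eta$ for some countable ordinal $\alpha$ and finite $n$. The only remaining task is to translate the statement $\check L\cong\check{L'}$ at the level of cyclic orders back into a statement at the level of linear orders, and the natural candidate is exactly cyclic equivalence $L\sim_c L'$.

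The one step that requires genuine verification, and hence the main obstacle, is establishing that $\check L\cong\check{L'}$ implies $L\sim_c L'$; that is, that an isomorphism of the glued cyclic orders can be unwound into a partition-plus-isomorphism witnessing $L\sim_c L'$. I would handle this by taking an isomorphism $\psi:\check L\to\check{L'}$ of cyclic orders, fixing a basepoint, and cutting both orders at the appropriate place: setting $L_1:=\{x\in L\mid a\leq x\}$-type cuts as in the proof of Theorem~\ref{cyctrans}, one shows the cut point in $L$ maps to a cut point in $L'$ and that $\psi$ restricts to order isomorphisms $L_1\to L'_1$ and $L_2\to L'_2$ on the two halves of the respective partitions $L=L_1+L_2$ and $L'=L'_2+L'_1$. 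This is essentially the same unwinding argument used in the converse direction of Theorem~\ref{cyctrans}, where a cyclic automorphism was converted into the partition data defining $\rtlo$; here it is applied to a cyclic isomorphism between two possibly distinct orders rather than an automorphism of a single one.

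Once this translation lemma is in hand, the corollary follows immediately: from $\check L\cong\check{L'}$ we conclude $L\sim_c L'$, and $L'$ has the asserted order type. I would expect the proof to be short, essentially a one-line invocation of the two cited results together with the cyclic-equivalence unwinding, so the bulk of the rigor lives in the remark immediately following the definition of $\sim_c$ (that it is an equivalence relation) and in the reusability of the partition construction from Theorem~\ref{cyctrans}.
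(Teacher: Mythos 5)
Your proposal is correct and follows exactly the route the paper intends: Theorem~\ref{cyctrans} converts $L\in\rtlo$ into transitivity of $\check L$, Theorem~\ref{CT} supplies $\check L\cong\check{L'}$ with $L'$ in normal form, and the cutting argument from the converse direction of Theorem~\ref{cyctrans} unwinds that cyclic isomorphism into $L\sim_c L'$ (the paper leaves this last step implicit, treating the corollary as immediate). No gaps.
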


In the rest of this section, we investigate some arithmetic closure properties of $\rtlo$, and thereby help in constructing some new cyclically transitive linear orders from the existing ones.
\begin{thm}\label{prop5.5}
    If $L \in\tlo$ and $\overline{L} \in \rtlo$ then $L \cdot \overline{L} \in \rtlo$. Moreover, if $L$ is unbounded or discrete then so is $L \cdot \overline{L}$.
\end{thm}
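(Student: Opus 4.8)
The plan is to build the cyclic-transitivity data for $L\cdot\overline L$ by \emph{combining} two ingredients: the transitivity of $L$ (used to move the first coordinate) and the cyclic transitivity of $\overline L$ (used to move the second coordinate and, crucially, to supply the partitions), with left-distributivity of the product over sums serving as the bridge between them. Fix $p=(a_1,a_2)\le q=(b_1,b_2)$ in $L\cdot\overline L$; by the colex convention this forces $a_2\le b_2$ in $\overline L$. From $L\in\tlo$ I obtain an automorphism $\psi\colon L\to L$ with $\psi(a_1)=b_1$, and from $\overline L\in\rtlo$ applied to $a_2\le b_2$ I obtain partitions $\overline L=M_1+M_2=M_2'+M_1'$ with $a_2\in M_1$, $b_2\in M_1'$, together with isomorphisms $F_1\colon M_1\to M_1'$ and $F_2\colon M_2\to M_2'$ satisfying $F_1(a_2)=b_2$.

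Next I would promote these to partitions of $L\cdot\overline L$. By left-distributivity,
\[
L\cdot\overline L=(L\cdot M_1)+(L\cdot M_2)=(L\cdot M_2')+(L\cdot M_1'),
\]
so setting $N_1:=L\cdot M_1$, $N_2:=L\cdot M_2$, $N_1':=L\cdot M_1'$, $N_2':=L\cdot M_2'$ yields the two required partitions, with $p\in N_1$ (as $a_2\in M_1$) and $q\in N_1'$ (as $b_2\in M_1'$). I then define $G_i\colon N_i\to N_i'$ by $G_i(x,y):=(\psi(x),F_i(y))$. Each $G_i$ is a bijection respecting colex order because $\psi$ and $F_i$ are order-isomorphisms (the second coordinate dominates, and on a fixed fibre $\psi$ governs the first coordinate), and $G_1(p)=(\psi(a_1),F_1(a_2))=(b_1,b_2)=q$. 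This is exactly the data in the definition of $\rtlo$, so $L\cdot\overline L\in\rtlo$. I expect the only care needed is the routine check that the $G_i$ are order-isomorphisms and that the displayed equalities are genuine partitions; degenerate subcases (e.g. $M_2=\bm 0$, matching Remark \ref{remtrivial}) cause no difficulty.

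For the ``moreover'' clause I would argue fibrewise. If $L$ is unbounded, then for any $(x,y)$ there are $x'<x<x''$ in $L$, whence $(x',y)<(x,y)<(x'',y)$, so no element is extremal and $L\cdot\overline L$ is unbounded. If $L$ is discrete, I first dispose of the trivial case $L\cong\bm 1$ (where $L\cdot\overline L\cong\overline L$) and then assume $|L|\ge 2$; Remark \ref{transitiveunbdd} now tells me that the discrete transitive order $L$ is in fact unbounded, so no $x\in L$ is greatest or least. Hence for each $(x,y)$ the elements $(x^+,y)$ and $(x^-,y)$ stay in the same fibre, and a short check shows they are the immediate successor and predecessor in colex order (any element strictly between would need its second coordinate simultaneously $>y$ and $<y$). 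Thus $L\cdot\overline L$ is discrete. The mild obstacle in this clause is recognising that discreteness of the product is really a statement about endpoint-free fibres, which is precisely what transitivity of $L$ supplies in the nontrivial discrete case.
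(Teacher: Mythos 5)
Your proposal is correct and follows essentially the same route as the paper: obtain the partitions from the cyclic transitivity of $\overline L$, promote them to $L\cdot\overline L$ by left-distributivity, and use the product maps $G_j=\psi\times F_j$ (the paper writes $\phi\times F_j$), with $G_1$ sending $(a_1,a_2)$ to $(b_1,b_2)$. The only difference is that you spell out the ``moreover'' clause fibrewise (correctly invoking Remark \ref{transitiveunbdd} to get endpoint-free fibres in the discrete case), whereas the paper dismisses that part as obvious.
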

\begin{proof}
    The proof of the second statement is obvious, so we only need to prove the first statement.
    
    Since $L\in\tlo$, Remark \ref{transitiveunbdd} yields that either $L\cong\bm1$ or $L\in\unbdd{\tlo}$. The conclusion is clear in the former case.
    
    Suppose $(a,\overline{a})\leq(b,\overline{b})$ in $L \cdot \overline{L}$, where $a,b \in L$ and $\overline{a},\overline{b} \in \overline{L}$. Then $\overline{a}\leq\overline{b}$ in $\overline{L}$, and hence by cyclic transitivity of $\overline{L}$, there are partitions $\overline{L} = \overline{L}_1 + \overline{L}_2 = \overline{L}_2' + \overline{L}_1'$, with $\overline{a} \in \overline{L}_1, \overline{b} \in \overline{L}_1'$, and bijective monotone maps $F_1:\overline{L}_1 \to \overline{L}_1'$ and $F_2:\overline{L}_2 \to \overline{L}_2'$ satisfying $F_1(\overline{a})=\overline{b}$. Moreover, since $L$ is transitive, there exists a bijective monotone map $ \phi:L \to L$ with $\phi(a)=b$.

    Choose partitions $L \cdot \overline{L} = L \cdot \overline{L}_1 + L \cdot \overline{L}_2 = L \cdot \overline{L}_2' + L \cdot \overline{L}_1'$ so that $(a,\overline a)\in L \cdot \overline{L}_1$ and $(b,\overline b)\in L \cdot \overline{L}_2'$. For $j=1,2$, let $G_j:L \cdot \overline{L}_j \to L \cdot \overline{L}_j'$ be a map defined by $$G_j(x,y) :=(\phi\times F_j)(x,y)= (\phi(x),F_j(y))\text{ for }x\in L,y \in \overline{L}_j.$$ The maps $G_j$ are bijective and monotone, and satisfy $G_1(a,\overline{a}) = (b, \overline{b})$. Therefore, $L \cdot \overline{L} \in \unbdd{\rtlo}$.
\end{proof}
\begin{thm}\label{prop5.6}
    If $L \in \rtlo$ and $\overline{L} \in \mathrm{d}\unbdd{\rtlo}$ then $L \cdot \overline{L} \in \unbdd{\rtlo}$. Hence, the class $\mathrm{d}\unbdd{\rtlo}$ is closed under products.
\end{thm}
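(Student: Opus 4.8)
The plan is to invoke Theorem \ref{cyctrans}: writing $M := L \cdot \overline{L}$, it suffices to produce, for arbitrary elements $(a,\overline a)$ and $(b,\overline b)$ of $M$, a cyclic-order automorphism of $\check M$ carrying the former to the latter. I would factor such an automorphism as a composite $\Psi_2 \circ \Psi_1$, where $\Psi_1$ adjusts the inner coordinate from $a$ to $b$ and $\Psi_2$ adjusts the outer coordinate. The automorphism $\Psi_2$ is the easy one and is essentially the construction of Theorem \ref{prop5.5} with the inner map taken to be the identity (so that no transitivity of $L$ is needed): since $\overline L \in \rtlo$, Theorem \ref{cyctrans} yields a cyclic automorphism $\psi$ of $\check{\overline L}$ sending any prescribed point to any other, and the induced map $(z,y) \mapsto (z, \psi(y))$ is then a cyclic automorphism of $\check M$ that fixes inner coordinates while moving outer coordinates as $\psi$ dictates.

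The crux --- the step where $L \in \tlo$ would have made things easy but $L \in \rtlo$ does not --- is the construction of $\Psi_1$. Because $L$ is only cyclically transitive, there need be no automorphism of $L$ sending $a$ to $b$; all Theorem \ref{cyctrans} applied to $L$ provides is a cyclic automorphism $g$ of $\check L$ with $g(a)=b$, that is, partitions $L = L_1 + L_2 = L_2' + L_1'$ with $a \in L_1$, together with isomorphisms $f_i : L_i \to L_i'$ assembling to $g$ and satisfying $f_1(a) = b \in L_1'$. The idea is to realise this single-copy ``rotation'' $g$ as a genuine order-automorphism of the whole product by letting it spill over into an adjacent copy, and this is exactly where discreteness and unboundedness of $\overline L$ enter. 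Concretely, I would re-tile $M$ by the ``diagonal'' intervals $\tilde C_y$, consisting of the $L_2$-part of the copy of $L$ at $y$ followed by the $L_1$-part of the copy at $y^+$: discreteness guarantees each $\tilde C_y$ is an interval and unboundedness guarantees that $y^+, y^-$ always exist, so the $\tilde C_y$ partition $M$ into intervals indexed in order by $\overline L$, each order-isomorphic to $L_2 + L_1 \cong L_2' + L_1' = L$ via $f_2 \sqcup f_1$. Mapping each $\tilde C_y$ onto the original copy at $y$ by this isomorphism defines
$$\Psi_1(x,y) = \begin{cases} (f_2(x), y), & x \in L_2, \\ (f_1(x), y^-), & x \in L_1. \end{cases}$$
I claim $\Psi_1$ is an order-automorphism of $M$ --- it matches two decompositions of $M$ into sums over the same ordered index set $\overline L$ summand-by-summand by order-isomorphisms --- and hence in particular a cyclic automorphism of $\check M$. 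A routine check of the possible types of adjacent pairs confirms order-preservation; the one computation that matters is $\Psi_1(a,\overline a) = (f_1(a), \overline a^-) = (b, \overline a^-)$.

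Composing, $\Psi_1$ takes $(a,\overline a)$ to $(b, \overline a^-)$, after which choosing $\psi$ with $\psi(\overline a^-) = \overline b$ makes $\Psi_2$ take $(b,\overline a^-)$ to $(b, \overline b)$; the harmless off-by-one outer shift produced by $\Psi_1$ is simply absorbed into $\Psi_2$. Thus $\Psi_2 \circ \Psi_1$ is the desired cyclic automorphism of $\check M$, and Theorem \ref{cyctrans} gives $M \in \rtlo$; unboundedness of $M$ is immediate from that of $\overline L$, so $M \in \unbdd{\rtlo}$.

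For the closing sentence, if moreover $L \in \mathrm{d}\unbdd{\rtlo}$ then both factors are discrete and unbounded, so every $(x,y)$ has $(x^+,y)$ as immediate successor and $(x^-,y)$ as immediate predecessor (no appeal to least or greatest elements of $L$ is needed precisely because $L$ is unbounded), whence $M$ is discrete; combined with the first part this shows $\mathrm{d}\unbdd{\rtlo}$ is closed under products. I expect the main obstacle to be verifying cleanly that the diagonal re-tiling yields a bona fide automorphism and handling the degenerate case $L_2 = \bm 0$ (note $L_1 \neq \bm 0$ automatically, as $a \in L_1$), everything else being bookkeeping.
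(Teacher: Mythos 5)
Your factorisation $\Psi_2\circ\Psi_1$ is a genuinely different (and conceptually cleaner) route than the paper's. The paper never passes through Theorem \ref{cyctrans}: it verifies the definition of $\rtlo$ for $L\cdot\overline{L}$ directly, exhibiting the four-piece partition $L\cdot\overline{L}=L\cdot\overline{L}_1+L\cdot\overline{L}_2=L\cdot\overline{L}'_2+L\cdot\overline{L}'_1$ and writing down the two witnessing isomorphisms $G_1,G_2$ in a single formula, with the inner rotation's spill-over encoded by the successor or predecessor $\overline{F}_j(y)^{\pm}$ in the outer coordinate; because everything is packed into one map, the paper must split into the cases $a\leq b$ and $b<a$ in $L$. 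You instead separate the two rotations, realise the inner one as an honest order-automorphism of $M$ via the diagonal re-tiling (this is exactly where discreteness and unboundedness of $\overline{L}$ enter, in both proofs), treat the outer one as Theorem \ref{prop5.5} with identity inner map, and let the group of automorphisms of $\check M$ absorb the off-by-one shift. That is a reasonable trade: you avoid exhibiting the partition of $M$ explicitly at the cost of invoking Theorem \ref{cyctrans} twice.

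There is, however, one misstatement to repair, and it is precisely the point that forces the paper's case split. For arbitrary $a,b\in L$ it is \emph{not} true that a cyclic automorphism $g$ of $\check L$ with $g(a)=b$ decomposes as $L=L_1+L_2=L'_2+L'_1$ with $a\in L_1$ and $f_1(a)=b\in L'_1$: take $L=\bm 3$, $a=2$, $b=0$; the only initial segment containing $a$ is all of $L$, which would force $f_1$ to be an automorphism of $\bm 3$, hence the identity, so $f_1(a)\neq b$ --- yet $\bm 3\in\rtlo$. The definition of $\rtlo$ only hands you that decomposition when $a\leq b$ in $L$; when $b<a$ the rotation sending $a$ to $b$ places $a$ in the \emph{second} block, and your computation $\Psi_1(a,\overline a)=(f_1(a),\overline a^-)$ is then wrong as written. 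The fix is cheap within your framework: either assume $a\leq b$ in $L$ without loss of generality (if $b<a$, build the automorphism of $\check M$ carrying $(b,\overline b)$ to $(a,\overline a)$ and invert it), or observe that the identical tiling formula applies when $a\in L_2$ with $f_2(a)=b$, yielding $\Psi_1(a,\overline a)=(f_2(a),\overline a)=(b,\overline a)$ --- a different but equally harmless outer shift for $\Psi_2$ to absorb. With that adjustment (and noting, as you do, that $L_2=\bm0$ causes no trouble), the argument is complete.
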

\begin{proof}
    The proof of the second statement is obvious. Moreover, since $\overline{L}\in\unbdd{\lo}$ we have $L \cdot \overline{L}\in\unbdd{\lo}$. Thus, we only need to prove that $L \cdot \overline{L} \in \rtlo$.
    
    For any $c\leq d$ in $L$, by the definition of cyclically transitive linear orders, there are partitions $L = L_1 + L_2 = L_2' + L_1'$ with $c \in L_1, d \in L_1'$, and bijective monotone maps $F_1:L_1 \to L_1'$ and $F_2:L_2 \to L_2'$ satisfying $F_1(c)=d$.

    Suppose $(a,\overline{a})\leq(b,\overline{b})$ in $L \cdot \overline{L}$. Then $\overline{a} \leq \overline{b}$. Since $\overline{L}\in\rtlo$, there are partitions $\overline{L} = \overline{L}_1 + \overline{L}_2 = \overline{L}_2' + \overline{L}_1'$ with $\overline{a} \in \overline{L}_1, \overline{b} \in \overline{L}_1'$, and bijective monotone maps $\overline{F}_1:\overline{L}_1 \to \overline{L}_1'$ and $\overline{F}_2:\overline{L}_2 \to \overline{L}_2'$ satisfying $\overline{F}_1(\overline{a})=\overline{b}$. Choose partitions $L\cdot\overline{L} = L\cdot\overline{L}_1 + L\cdot \overline{L}_2 = L \cdot \overline{L}_2' + L \cdot \overline{L}_1'$ so that $(a,\overline{a})\in L\cdot\overline{L}_1$ and $(b,\overline{b})\in L\cdot\overline{L}'_1$.
    
    There are two cases:
    \begin{description}
    \item[Case $a \leq b$] Choose $c=a,d=b$ in the second paragraph of the proof to obtain $L = L_1 + L_2 = L_2' + L_1'$ with $a \in L_1, b \in L_1'$.   
    For $j=1,2$, define $G_j:L \cdot \overline{L}_j \to L \cdot \overline{L}_j'$, for $x\in L, y \in\overline{L}_j$, as
        $$G_j(x,y):=
        \begin{cases}
            (F_1(x),\overline{F}_j(y)) &\text{ if } x \in L_1,\\
            (F_2(x),\overline{F}_j(y)^+) &\text{ if } x \in L_2.
        \end{cases}$$
        Since $\overline{L}_j$ is discrete and unbounded, in view of Remark \ref{rem2}, the maps $G_j$ are bijective and monotone. Moreover, $G_1(a,\overline{a})=(b,\overline{b})$.
    \item[Case $b<a$] Then $\overline{a}<\overline{b}$. Choose $c=b$ and $d=a$ in the second paragraph of the proof to obtain $L = L_1 + L_2 = L_2' + L_1'$ with $b \in L_1, a \in L_1'$.
    
    For $j=1,2$, define $G_j:L \cdot \overline{L}_j \to L \cdot \overline{L}_j'$, for $x\in L,y\in\overline L_j$, as
            $$G_j(x,y): =
        \begin{cases}
            (F^{-1}_1(x),\overline{F}_j(y)) &\text{ if } x \in L_1',\\
            (F^{-1}_2(x),\overline{F}_j(y)^-) &\text{ if } x \in L_2'.
        \end{cases}$$
Since $\overline{L}_j$ is discrete and unbounded, in view of Remark \ref{rem2}, the maps $G_j$ are bijective and monotone. Moreover, $G_1(a,\overline{a})=(b,\overline{b})$.       
\end{description}
Therefore, we have completed the proof that $L\cdot\overline{L} \in \rtlo$.
\end{proof}

We use the results in this section to give some examples of cyclically transitive linear orders.
\begin{exmp}
We have $\omega+\omega^*\sim_c\zeta,\eta\sim_c\eta+\bm1\sim_c\bm1+\eta$, and hence all of these linear orders are in $\rtlo$ thanks to Corollary \ref{CTcor}. We also have $\omega+\zeta\cdot\eta+\omega^*\in\rtlo$.

Since $\bm n\in\rtlo$ for each $n\in\mathbb N$, Theorems \ref{morel} and \ref{prop5.5} together yield that $\zeta^\alpha\cdot D\cdot\bm n\in\rtlo$ for each ordinal $\alpha$ and transitive dense linear order $D$. 
\end{exmp}

% Write some examples of cyclically transitive orders that are bounded on at least one side. For example, $\omega+\omega^*,1+\eta,\bm n$...

% The class $\tlo$ is closed under arbitrary order finite Hausdorff condensations.

% Suppose $L\in\rtlo$. Then is $L^{(\alpha)}\in\rtlo$ for every $\alpha$?

% Suppose $L\in\rtlo$. If all finite derivatives of $L$ are unbounded then is $L^{(\omega)}\in\rtlo$?

% If $D\in\unbdd\lo$ and $\bm1+D\in\rtlo$ then $\zeta\cdot(\bm1+D)\in\rtlo$, and hence $\omega+\zeta\cdot D+\omega^*\in\rtlo$?

% If $D\in\unbdd\lo$, $\bm1+D\in\rtlo$, $L\in\mathrm{d}\unbdd\rtlo$ and $L=L_1+L_2$, where $L_1\in\lo^{01}, L_2\in\lo^{10}$ then $L\cdot(\bm1+D)\in\rtlo$, and hence $L_2+L\cdot D+L_1\in\rtlo$?

% Suppose $\gamma$ is an ordinal and for each $\alpha\in\gamma$, we have $L_\alpha\in\rtlo$ with $L_0$ discrete unbounded. Then $\prod_{\alpha\in\gamma}L_\alpha\in\rtlo$? Define this last product!

To end this section, we see the effect of iterated Hausdorff condensation on $\rtlo$. 

\begin{thm}\label{ltlo-condens}
Suppose $L\in\rtlo$ is not bounded. Given an ordinal $\gamma$, if the condensation $\fin^\beta(L)$ is not bounded for each $\beta<\gamma$, then $\fin^\gamma(L)\in\rtlo$.
\end{thm}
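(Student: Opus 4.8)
The plan is to pass to the associated cyclic orders and use Theorem \ref{cyctrans}: writing $M:=\fin^\gamma(L)$, it suffices to show that $\check M$ is transitive, i.e. that for any two $\sim_\gamma$-classes there is an automorphism of $\check M$ carrying one to the other. I would build such an automorphism by transporting a witness of cyclic transitivity of $L$ itself through the condensation. Concretely, given two classes, fix representatives $a\le b$. Cyclic transitivity of $L$ supplies partitions $L=L_1+L_2=L_2'+L_1'$ with $a\in L_1$, $b\in L_1'$ and order-isomorphisms $F_1\colon L_1\to L_1'$, $F_2\colon L_2\to L_2'$ with $F_1(a)=b$; gluing them gives the bijection $\phi\colon L\to L$ that equals $F_1$ on $L_1$ and $F_2$ on $L_2$, which by the forward direction of Theorem \ref{cyctrans} is an automorphism of $\check L$ with $\phi(a)=b$. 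If $\phi$ maps $\sim_\gamma$-classes bijectively onto $\sim_\gamma$-classes, then it descends to a bijection $\bar\phi$ of $M$; since $\sim_\gamma$-classes are intervals, hence arcs of $\check L$, and $\phi$ preserves the ternary cyclic relation, $\bar\phi$ is an automorphism of $\check M$ with $\bar\phi([a])=[b]$. Letting the classes vary gives transitivity of $\check M$, and Theorem \ref{cyctrans} yields $M\in\rtlo$. (Equal classes are handled by taking $a=b$, or directly by Remark \ref{remtrivial}.)

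The heart of the matter, and the place where the hypothesis enters, is the claim that $\phi$ preserves $\sim_\gamma$. I would prove by transfinite induction on $\delta\le\gamma$ the combined statement: no $\sim_\delta$-class meets both blocks of $L=L_1+L_2$, none meets both blocks of $L=L_2'+L_1'$, and $\phi$ preserves $\sim_\delta$. The base case $\delta=0$ is trivial since the classes are singletons, and the limit case is immediate because $\sim_\delta=\bigcup_{\beta<\delta}\sim_\beta$, so a class straddling a cut would already straddle it at some $\beta<\delta$, and preservation is inherited from the lower levels.

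The successor step $\delta\to\delta+1$ (with $\delta<\gamma$) is where ``$\fin^\delta(L)$ is not bounded'' is used. For a pair of elements lying in a single block, $\phi$ restricts to an order-isomorphism and carries the relevant interval to an order-isomorphic interval; since $\phi$ preserves $\sim_\delta$ by induction, it preserves the number of $\sim_\delta$-classes met by that interval, so $\sim_{\delta+1}$ is preserved on such pairs. The only danger is a straddling pair $x\in L_1$, $y\in L_2$ with $x\sim_{\delta+1}y$. Here the key observation is that $\phi$ carries $[x,y]$ onto the union of the final segment $\{w\in L_1':w\ge\phi(x)\}$ and the initial segment $\{w\in L_2':w\le\phi(y)\}$ of $L$, in particular onto a subset that is simultaneously cofinal and coinitial in $L$. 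As $\fin^\delta(L)$ is not bounded it has no greatest class or no least class, so any cofinal-and-coinitial set meets infinitely many $\sim_\delta$-classes; pulling this back through $\phi$ (which preserves $\sim_\delta$, hence the count of classes met) shows $[x,y]$ meets infinitely many $\sim_\delta$-classes, contradicting $x\sim_{\delta+1}y$. The symmetric computation with $\phi^{-1}$ on a straddling pair across the cut $L=L_2'+L_1'$ rules out straddling there as well. This simultaneously establishes the non-straddling assertions and the preservation of $\sim_{\delta+1}$, closing the induction; note the hypothesis is invoked only for $\delta<\gamma$, exactly as assumed.

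The step I expect to be the main obstacle is precisely this successor case: identifying that the $\phi$-image of a straddling interval is forced to be cofinal and coinitial, and then converting ``not bounded'' into ``meets infinitely many $\sim_\delta$-classes''. The remaining ingredients — functoriality of condensation along the isomorphisms $F_1,F_2$, the descent of $\phi$ to $\bar\phi$, and the final appeal to Theorem \ref{cyctrans} — are routine bookkeeping, provided one records at the outset that $\sim_\gamma$-classes are intervals so that the descended map respects the cyclic structure.
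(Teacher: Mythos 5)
Your proof is correct, and the engine driving it is the same one the paper uses --- unboundedness of the intermediate condensations, transported through the isomorphisms $F_1,F_2$, forces the two cuts to survive each condensation step --- but the architecture is genuinely different. The paper inducts on $\gamma$ directly: at a successor stage $\gamma=\beta+\bm1$ it invokes cyclic transitivity of the already-condensed order $\fin^\beta(L)$ (available from the induction hypothesis) to obtain a \emph{fresh} partition of $\fin^\beta(L)$, and then applies Remark \ref{sumderivative} once; only at limit stages does it fix a partition of $L$ itself and push it through all lower levels. You instead fix a single witness $\phi$ of cyclic transitivity of $L$ at the bottom and run one inner transfinite induction on $\delta\le\gamma$ showing that $\phi$ preserves $\sim_\delta$ and that no $\sim_\delta$-class straddles either cut; your cofinal-and-coinitial argument (a nonempty final segment of $L_1'$ union a nonempty initial segment of $L_2'$ must meet infinitely many $\sim_\delta$-classes when $\fin^\delta(L)$ lacks a greatest or a least class) is in effect a self-contained proof of the instance of Remark \ref{sumderivative} that the paper cites without proof. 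Your version is more uniform --- successor and limit stages are handled by one mechanism, the intermediate orders $\fin^\beta(L)$ never need to be shown cyclically transitive along the way, and the hypothesis ``$\fin^\beta(L)$ not bounded for $\beta<\gamma$'' enters exactly once per successor step --- while routing the conclusion through automorphisms of $\check L$ and Theorem \ref{cyctrans} rather than exhibiting the partitions of $\fin^\gamma(L)$ directly is a cosmetic difference. What the paper's organization buys in exchange is that it records the cyclic transitivity of all the intermediate condensations as a by-product and produces the partitions and induced isomorphisms of $\fin^\gamma(L)$ explicitly.
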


\begin{proof}
We will prove the result by transfinite induction on $\gamma$.

If $\gamma=0$ then the conclusion is obvious.

If $\gamma=\beta+1$ for some ordinal $\beta$, then by the induction hypothesis we have that $\fin^\beta(L)\in\rtlo$. Moreover, the hypothesis guarantees that $\fin^\beta(L)$ is not bounded.

Let $\fin^\gamma(a)\leq\fin^\gamma(b)$ in $\fin^\gamma(L)$ for some $a,b\in L$. In view of Remark \ref{remtrivial}, we may assume that the inequality is strict. Then $\fin^\beta(a)<\fin^\beta(b)$ in $\fin^\beta(L)$. Since $\fin^\beta(L)$ is cyclically transitive, there are partitions $\fin^\beta(L)=L_1+L_2=L'_2+L'_1$ with $\fin^\beta(a)\in L_1,\fin^\beta(b)\in L_2$, and isomorphisms $F_1:L_1\to L'_1, F_2:L_2\to L'_2$ satisfying $F_1(\fin^\beta(a))=\fin^\beta(b)$. Since $\fin^\beta(L)$ is not bounded, either both $L_1$ and $L'_2$ are unbounded below or both $L'_1$ and $L_2$ are unbounded above. Using the isomorphisms $F_1$ and $F_2$, we conclude that either $L_1$ is unbounded above or $L_2$ is unbounded below, and a similar statement for the partition $L=L'_2+L'_1$. Thus Remark \ref{sumderivative} ensures that $\fin^\gamma(L)=\fin(L_1)+\fin(L_2)=\fin(L'_2)+\fin(L'_1)$ with $\fin^\gamma(a)\in\fin(L_1)$ and $\fin^\gamma(b)\in\fin(L'_1)$. Moreover, the isomorphisms $F_1,F_2$ induce appropriate isomorphisms. Thus $\fin^\gamma(L)\in\rtlo$.

If $\gamma$ is a limit ordinal, then by the hypothesis as well as the induction hypothesis we know for each $\beta<\gamma$ that $\fin^\beta(L)$ is cyclically transitive but not bounded.

Let $\fin^\gamma(a)\leq\fin^\gamma(b)$ in $\fin^\gamma(L)$ for some $a,b\in L$. In view of Remark \ref{remtrivial} we may assume that the inequality is strict. Then $\fin^\beta(a)<\fin^\beta(b)$ in $\fin^\beta(L)$ for each $\beta<\gamma$. In particular, $a<b$ in $L$. Since $L$ is cyclically transitive, there are partitions $L=L_1+L_2=L'_2+L'_1$ with $a\in L_1,b\in L_2$, and isomorphisms $F_1:L_1\to L'_1, F_2:L_2\to L'_2$ satisfying $F_1(a)=b$. For each $\beta<\gamma$, an argument similar to the successor case above shows that there are partitions $\fin^\beta(L)=\fin^\beta(L_1)+\fin^\beta(L_2)=\fin^\beta(L'_2)+\fin^\beta(L'_1)$ with induced isomorphisms $\fin^\beta(L_1)\cong\fin^\beta(L'_1)$ and $\fin^\beta(L_2)\cong\fin^\beta(L'_2)$. Thus by the definition of $\fin^\gamma$, we infer that for any $x\in L_1$ and $y\in L_2$, we have $\fin^\gamma(x)<\fin^\gamma(y)$ in $\fin^\gamma(L)$. Thus, $\fin^\gamma(L)=\fin^\gamma(L_1)+\fin^\gamma(L_2)$. Similarly, we can show that $\fin^\gamma(L)=\fin^\gamma(L'_2)+\fin^\gamma(L'_1)$. Moreover, the isomorphisms $F_1,F_2$ induce isomorphisms $\fin^\gamma(L_1)\cong\fin^\gamma(L'_1)$ and $\fin^\gamma(L_2)\cong\fin^\gamma(L'_2)$, where the former maps $\fin^\gamma(a)$ to $\fin^\gamma(b)$. Thus $\fin^\gamma(L)\in\rtlo$ in this case too, thereby completing the proof of the theorem.
\end{proof}

\begin{rem}
By developing the theory of iterated Hausdorff condensations for cyclic orders, the technique used in the proof of the above result will show that the class of transitive cyclic orders is closed under all iterated Hausdorff condensations.
\end{rem}

\section{Exponentiable subclass of cyclically transitive linear orders}\label{section-last}
The main goal of this section is to prove the theorem below that provides a class of exponentiable linear orders which are not necessarily transitive. Then we end with several questions regarding relationship between various classes of linear orders.

\begin{thm}\label{main-thm}
$\mathrm{d}\unbdd{\rtlo}\subseteq\explo$.
\end{thm}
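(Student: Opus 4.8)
The plan is to show that for any $L\in\mathrm{d}\unbdd{\rtlo}$, any ordinal $\alpha$, and any two base points $a,b\in L$, we have $(L,a)^\alpha\cong(L,b)^\alpha$. Since cyclic transitivity lets us move any element to any other by gluing together two isomorphic pieces (an automorphism of the cyclic order $\check L$), the core idea is to build an explicit isomorphism between $(L,a)^\alpha$ and $(L,b)^\alpha$ out of the partition data supplied by the definition of $\rtlo$, and to proceed by transfinite induction on $\alpha$.

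First I would reduce to the successor step. By Remark \ref{rem7}, every ordinal is $\beta+\bm n$ with $\beta$ limit or zero, and Proposition \ref{prop 4.3}(1) factors $(L,a)^{\beta+\bm n}\cong(L,a)^\beta\cdot(L,a)^{\bm n}$; Remark \ref{rem8} already handles finite exponents, so the finite tail is harmless and the real content is independence at limit and successor stages. For the successor case $\alpha=\gamma+1$, I would use the representation in Remark \ref{rem-rep}, which writes $(L,a)^{\gamma+1}$ as a sum indexed by $(L,a)^\gamma$ of copies of blocks built from $L_{<a}$, a singleton, and $L_{>a}$. Concretely I would exploit the recursion $(L,a)^{\gamma+1}\cong(L,a)^\gamma\cdot L$, where each function on $\gamma+1$ is determined by its restriction to $\gamma$ together with its value at the top index; the top coordinate ranges over $L$ itself. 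This is where cyclic transitivity enters: given $a\le b$, the definition yields partitions $L=L_1+L_2=L_2'+L_1'$ with isomorphisms $F_1,F_2$ and $F_1(a)=b$, i.e.\ an automorphism $\phi$ of $\check L$ moving $a$ to $b$. The difficulty is that $\phi$ is \emph{not} an automorphism of $L$—it cyclically rotates $L$—so one cannot simply apply it coordinatewise. The key observation I would establish is that although a single rotation breaks monotonicity at one ``seam,'' the finite-support condition means each function has a well-defined largest index of nonzero value, and the carrying/borrowing behaviour of the ``$+$'' and ``$-$'' corrections (exactly as in the successor-case formulas $\overline F_j(y)^+$ and $\overline F_j(y)^-$ of Theorem \ref{prop5.6}) propagates this rotation consistently up the tower. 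Discreteness supplies the successor/predecessor operations needed for these corrections, and unboundedness (via Remark \ref{rem2}) guarantees the pieces $L_1,L_2,L_1',L_2'$ are themselves unbounded so the induced maps stay bijective.

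The main obstacle, and the heart of the proof, is therefore constructing the successor-step isomorphism $(L,a)^{\gamma+1}\cong(L,b)^{\gamma+1}$ compatibly across the whole exponential rather than one coordinate at a time. I would phrase this as: the map that applies $\phi$ at the top coordinate while recursively applying the inductively-constructed isomorphism $(L,a)^\gamma\cong(L,b)^\gamma$ below, with a successor/predecessor correction inherited at the first coordinate that ``crosses the seam,'' is an order isomorphism. Verifying monotonicity requires a careful case analysis on where the largest differing index of two functions sits relative to the seam—essentially checking that the colex order is preserved when the rotation is carried down, which is the same bookkeeping that made $G_j$ monotone in Theorems \ref{prop5.5} and \ref{prop5.6}, now iterated through the exponent. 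I expect discreteness to be used precisely to make these corrections well-defined and reversible.

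Finally, for the limit case $\alpha$ a limit ordinal, I would argue that $(L,a)^\alpha\cong\bigcup_{\gamma<\alpha}(L,a)^\gamma$ (as a directed union under the natural inclusions coming from Remark \ref{rem-rep} and finite support), and that the base-point-changing isomorphisms constructed at each successor stage are coherent with these inclusions, so they assemble into an isomorphism at the limit. Coherence is the thing to check here: the isomorphism built for exponent $\gamma$ should agree with the restriction of the one built for $\gamma'>\gamma$, which follows if the construction is genuinely defined ``coordinatewise from the top down'' and the seam-correction depends only on the finitely many coordinates of a given function. Once the successor step is secured, the limit and finite-tail steps are routine, completing the transfinite induction and hence the proof that $\mathrm{d}\unbdd{\rtlo}\subseteq\explo$.
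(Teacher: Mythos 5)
Your plan matches the paper's proof in all essentials: a transfinite recursion that threads the cyclic-transitivity partition (the ``seam'') through the exponent tower, applying the successor/predecessor corrections of Theorem \ref{prop5.6} at successor stages, taking coherent directed unions at limit stages, and handling the finite tail via Remarks \ref{rem7} and \ref{rem8}. The one point to make explicit when writing it up is that the inductive invariant must be the full cyclic-equivalence data --- the partitions of $(L,a)^\gamma$ and $(L,b)^\gamma$ together with the piecewise isomorphisms and their compatibility with the natural embeddings --- and not merely the existence of an isomorphism $(L,a)^\gamma\cong(L,b)^\gamma$, since it is the seam at stage $\gamma$ that determines where the $(\cdot)^+$ correction is applied at stage $\gamma+1$ and the compatibility that makes the limit-stage union well-defined.
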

\begin{proof}
    Suppose $L \in \mathrm{d}\unbdd{\rtlo}$. Fix some $a\leq b$ in $L$. Then, in view of Remark \ref{rem2}, there exist $L_1,L_1',L_2,L_2' \in \unbdd{\lo}$ with $a \in L_1, b \in L_1'$, and bijective monotone maps $F_1,F_2$ such that $F_1:L_1\to L_1'$ with $F_1(a)=b$ and $F_2:L_2\to L_2'$. In this proof, for any $c \in L$ and any ordinal $\beta$, the notation $c_{\beta}$ denotes the constant sequence indexed by $\beta$ with value $c$.  
    
    For each ordinal $\alpha$, we construct the following data using transfinite recursion:
    \begin{enumerate}
        \item partitions $(L,a)^\alpha=L(\alpha)_1+L(\alpha)_2$ and $(L,b)^\alpha=L(\alpha)'_2+L(\alpha)'_1$ satisfying $a_\alpha\in L(\alpha)_1$ and $b_\alpha\in L(\alpha)'_1$;
        \item whenever $\alpha=\beta+\bm1$ then, for $j=1,2$, $L(\alpha)_j= (L,a)^\beta\cdot L_j$ and $L(\alpha)'_j= (L,b)^\beta\cdot L'_j$;
        \item $L(\alpha)_2=L(\alpha)'_2=\bm0$ whenever $\alpha$ is $\bm0$ or a limit ordinal;
        \item isomorphisms $F(\alpha)_1:L(\alpha)_1\to L(\alpha)'_1$ and $F(\alpha)_2:L(\alpha)_2\to L(\alpha)'_2$ satisfying $F(\alpha)_1(a_\alpha)=b_\alpha$;
        \item whenever $\delta\leq\alpha$, $x\in L(\delta)_1$ and $z\in L(\delta)'_1$ then $(x,a_{\alpha-\delta})\in L(\alpha)_1$, $(z,b_{\alpha-\delta})\in L(\alpha)'_1$ and $F(\alpha)_1(x,a_{\alpha-\delta})=(F(\delta)_1(x),b_{\alpha-\delta})$.
    \end{enumerate}
    For $\delta\leq\alpha$, thanks to condition $(5)$ above, there will be embeddings $I_\delta^\alpha:L(\delta)_1\to L(\alpha)_1,\,x\mapsto(x,a_{\alpha-\delta})$, and $J_\delta^\alpha:L(\delta)'_1\to L(\alpha)'_1,\,z\mapsto(z,b_{\alpha-\delta})$ satisfying $F(\alpha)_1\circ I_\delta^\alpha=J_\delta^\alpha\circ F(\delta)_1$.
    
    There are three inductive steps in the construction of the inclusion-preserving partial isomorphism $F(\alpha)_1$ and the isomorphism $G(\alpha)$.
    \begin{description}
        \item[Case $\alpha=\bm0$] We can choose $L(\bm0)_1:=(L,a)^{\bm0}\cong\bm 0$, $L(\bm0)'_1:=(L,b)^{\bm0}\cong\bm0$, $L(\bm0)_2=L(\bm0)'_2:=\bm0$, and obvious isomorphisms $F(\bm0)_1$ and $F(\bm0)_2$.
        
        \item[Case $\alpha=\bm1$] It can be easily verified that 
        $F(\bm1)_1 := F_1$ and $F(\bm1)_2:=F_2$ satisfy the required conditions. 
        
        \item[Case $\alpha=\beta+\bm1$ for some limit ordinal $\beta$]
        By the inductive hypothesis, we have $(L,a)^\beta=L(\beta)_1$ and $(L,b)^\beta=L(\beta)'_1$. For $j=1,2$, set $L(\alpha)_j:= L(\beta)_1 \cdot L_j, L(\alpha)'_j := L(\beta)'_1 \cdot L'_j$ and $$F(\alpha)_j:=F(\beta)_1\times F_j.$$

        Being a product of two isomorphisms, $F(\alpha)_j$ is an isomorphism. Moreover, since $a\in L_1,b\in L'_1$, and by the inductive hypothesis we have $a_\beta\in L(\beta)_1$ and $b_\beta\in L(\beta)'_1$, we get that $a_\alpha\in L(\alpha)_1$ and $b_\alpha\in L(\alpha)'_1$ as required. Finally, thanks to Remark \ref{cor6.2}, in order to verify condition $(5)$, it is sufficient to note that whenever $x\in L(\beta)_1$, we have $(x,a)\in L(\alpha)_1$ (along with a similar statement for $L(\alpha)'_1$).
        
        \item[Case $\alpha = \beta + \mathbf{2}$ for some ordinal $\beta$] By the inductive hypothesis, we have partitions $(L,a)^{\beta+\bm1}=L(\beta+\bm1)_1 + L(\beta+\bm1)_2$ and $(L,b)^{\beta+\bm1}=L(\beta+\bm1)'_2 + L(\beta+\bm1)'_1$. For $j=1,2$, set $L(\alpha)_j:= L(\beta+\bm1)_1 \cdot L_j, L(\alpha)'_j := L(\beta+\bm1)'_1 \cdot L'_j$ and $$F(\alpha)_j(x,y) :=
        \begin{cases}
            (F(\beta+\bm{1})_1(x),F_j(y)) &\text{ if } x \in L(\beta+\bm1)_1,\\
            (F(\beta+\bm{1})_2(x),F_j(y)^+) &\text{ if } x \in L(\beta+\bm 1)_2.
        \end{cases}$$
        Since $L_j$ is discrete and unbounded, it can be verified that $F(\alpha)_j$ is an isomorphism. Moreover, since $a\in L_1,b\in L'_1$, and by the inductive hypothesis we have $a_{\beta+\bm1}\in L(\beta+\bm1)_1$ and $b_{\beta+\bm1}\in L(\beta+\bm1)'_1$, we get that $a_\alpha\in L(\alpha)_1$ and $b_\alpha\in L(\alpha)'_1$ as required. Finally, thanks to Remark \ref{cor6.2}, in order to verify condition $(5)$, it is sufficient to note that whenever $x\in L(\beta+\bm1)_1$, we have $(x,a)\in L(\alpha)_1$ (along with a similar statement for $L(\alpha)'_1$).

        \item[Case $\alpha$ is a limit ordinal] By the inductive hypothesis, for each $\delta\leq\gamma<\alpha$, we have injective monotone maps $I_\delta^\gamma:  L(\delta)_1\to L(\gamma)_1$ defined by $I_\delta^\gamma(x):=(x,a_{\gamma-\delta})$ and $J_\delta^\gamma: L(\delta)'_1\to L(\gamma)'_1$ defined by $J_\delta^\gamma(z):=(z,b_{\gamma-\delta})$. In view of Remark \ref{cor6.2}, both $\langle(L(\beta)_1)_{\beta<\alpha},(I_\beta^\gamma)_{\beta\leq\gamma<\alpha}\rangle$ and $\langle(L(\beta)'_1)_{\beta<\alpha},(J_\beta^\gamma)_{\beta\leq\gamma<\alpha}\rangle$ are directed systems. It is straightforward to verify that the directed union $\operatornamewithlimits{\bigcup}\limits_{\beta<\alpha} L(\beta)_1$ of the former system is indeed a linear order.
        
        Now we prove that $\operatornamewithlimits{\bigcup}\limits_{\beta<\alpha} L(\beta)_1=(L,a)^\alpha$. Let $x\in L(\beta)_1$. Then $I_\beta^{\beta+\bm1}(x)=(x,a)\in L(\beta+\bm1)_1=(L,a)^\beta\cdot L_1$. Hence $x\in(L,a)^\beta$. Thus we can define, for $\beta<\alpha$, an embedding $I'_\beta:L(\beta)_1\to (L,a)^\alpha$ by $I'_\beta(x):=(x,a_{\alpha-\beta})$. It is easy to see that $(I'_\beta)_{\beta<\alpha}$ form a cone under the directed system. Therefore, $\operatornamewithlimits{\bigcup}\limits_{\beta<\alpha} L(\beta)_1 \subseteq (L,a)^\alpha$. To see the reverse inclusion, choose any $x\in(L,a)^\alpha$. Then $x$ can be thought of as a finite support function from $\alpha$ to $L$ that takes value $a$ at all but finitely many places. Hence there exists $\beta<\alpha$ and $x'\in (L,a)^\beta$ such that $x=(x',a_{\alpha-\beta})$. Since $L(\beta+\bm1)_1=(L,a)^\beta\cdot L_1$ and $\alpha$ is a limit ordinal, $x'':=(x',a)\in L(\beta+\bm1)_1$ and $I'_{\beta+\bm1}(x'')=x$. Therefore, $\operatornamewithlimits{\bigcup}\limits_{\beta<\alpha} L(\beta)_1 = (L,a)^\alpha$. Set $L(\alpha)_1:=(L,a)^\alpha$ and $L(\alpha)_2:=\bm0$. Moreover, we can also relabel $I'_\beta$ as $I_\beta^\alpha:L(\beta)_1\to L(\alpha)_1$. Also define $I_\alpha^\alpha$ to be the identity on $L(\alpha)_1$.

        Similarly, we can show that $\operatornamewithlimits{\bigcup}\limits_{\beta<\alpha} L(\beta)'_1$ exists and $(L,b)^\alpha = \operatornamewithlimits{\bigcup}\limits_{\beta<\alpha} L(\beta)'_1$. We set $L(\alpha)'_1:=(L,b)^\alpha$ and $L(\alpha)'_2:=\bm0$ and define embeddings $J_\beta^\alpha:L(\beta)'_1\to L(\alpha)'_1$. Also define $J_\alpha^\alpha$ to be the identity on $L(\alpha)'_1$.

        It is readily verified for $\beta\leq\gamma\leq\alpha$, we have
        \begin{equation}\label{comp}
            I_\beta^\alpha=I_\gamma^\alpha\circ I_\beta^\gamma,\ J_\beta^\alpha=J_\gamma^\alpha\circ J_\beta^\gamma.
        \end{equation}

        Define $F(\alpha)_1: L(\alpha)_1 \to L(\alpha)'_1$ as follows. Let $x\in L(\alpha)_1$. Then thanks to the directed union, there is $\beta<\alpha$ and $x'\in L(\beta)_1$ such that $I_\beta^\alpha(x')=x$. Set $$F(\alpha)_1(x):=J_\beta^\alpha(F(\beta)_1(x')).$$
        
        \textbf{$F(\alpha)_1$ is well-defined:} Let $x''\in L(\gamma)_1$ satisfy $I_\gamma^\alpha(x'')=x$ for some $\gamma<\alpha$. Without loss of generality, we may assume that $\beta\leq\gamma$. Then $I_\beta^\gamma(x')=x''$. Hence, $$J_\gamma^\alpha(F(\gamma)_1(x''))=J_\gamma^\alpha(F(\gamma)_1(I_\beta^\gamma(x')))=J_\gamma^\alpha(J_\beta^\gamma(F(\beta)_1(x')))=J_\beta^\alpha(F(\beta)_1(x')),$$ where the second equality follows from condition $(5)$ of the inductive hypothesis, and the third equality follows from Equation \eqref{comp}.

        \textbf{$F(\alpha)_1$ is injective:} Suppose $x_1,x_2\in L(\alpha)_1$ satisfy $F(\alpha)_1(x_1)=F(\alpha)_1(x_2)$. Then, since $L(\alpha)_1=\operatornamewithlimits{\bigcup}\limits_{\gamma<\alpha} L(\gamma)_1$, there is some $\gamma<\alpha$ and elements $x'_1,x'_2\in L(\gamma)_1$ such that $J_\gamma^\alpha(F(\gamma)_1(x'_1))=J_\gamma^\alpha(F(\gamma)_1(x'_2))$. Since both $J_\gamma^\alpha$ and $F(\gamma)_1$ are injective, we conclude that $x'_1=x'_2$.

        \textbf{$F(\alpha)_1$ is surjective:} Let $z\in L(\alpha)'_1$. Then, since $L(\alpha)'_1=\operatornamewithlimits{\bigcup}\limits_{\gamma<\alpha} L(\gamma)'_1$, there is some $\gamma<\alpha$ and $y'\in L(\gamma)'_1$ such that $J_\gamma^\alpha(y')=y$. Since $F(\gamma)_1:L(\gamma)_1\to L(\gamma)'_1$ is an isomorphism, $F(\alpha)_1(I_\gamma^\alpha(F(\gamma)_1^{-1}(y')))=J_\gamma^\alpha(F(\gamma)_1(F(\gamma)_1^{-1}(y')))=J_\gamma^\alpha(y')=y$, as required.

        The bijective map $F(\alpha)_1:L(\alpha)_1\to L(\alpha)'_1$ is monotone since it is the directed union of the isomorphisms $(F(\gamma)_1)_{\gamma<\alpha}$. Our construction clearly shows that $F(\alpha)_1(a_\alpha)=b_\alpha$. Finally, note from the definition of the isomorphism $F(\alpha)_1$ that, for $\beta<\alpha$, we have $F(\alpha)_1\circ I_\beta^\alpha=J_\beta^\alpha\circ F(\beta)_1$.

    \end{description}

Now we are ready to describe the isomorphisms between $(L,a)^\alpha$ and $(L,b)^\alpha$ for all $\alpha$.

When $\alpha$ is a limit ordinal then $G(\alpha):=F(\alpha)_1:(L,a)^{\alpha} \to (L,b)^{\alpha}$ is an isomorphism.

On the other hand, if $\alpha$ is not a limit ordinal, then thanks to Remark \ref{rem7}, there exists a unique ordinal $\beta$ and a non-zero $n\in\mathbb N$ such that $\alpha=\beta+\bm n$. Then $(L,a)^\alpha=(L,a)^\beta\cdot(L,a)^{\bm n}$, and hence $G(\alpha)=G(\beta)\times G(\bm n)$ is the required isomorphism, where $G(\bm n)$ is the identity map as discussed in Remark \ref{rem8}. This completes the proof that $L\in\explo$.
\end{proof}

Since all infinite exponentiable linear orders are unbounded (Proposition \ref{expunbdd}), we can only hope to relax the discreteness hypothesis in the above theorem.
\begin{ques}
Is $\mathrm{D}\unbdd{\rtlo}\subseteq\explo$? 
\end{ques}

In fact, we do not even know the answer to the following question.
\begin{ques}
Is $\mathrm{D}\unbdd{\rtlo}\subseteq\tlo$?
\end{ques}

The most important question is the following extension of Question \ref{ques 4.12}. 
\begin{ques}
Is $\explo\subseteq\rtlo$?
\end{ques}

% \begin{ques}
%     Is $\zeta\cdot\bm n\cdot\eta\in\explo$?
% \end{ques}

% Using Morel's characterization of transitive linear orders (Theorem \ref{morel}), we get some immediate consequences of the above result. 

% \begin{cor}\label{corex1}
%     Any discrete unbounded linear order of the form $\mathbb{Z}^\gamma \cdot D \cdot \bm n$ for some ordinal $\gamma$, transitive dense linear order $D$ and finite $n$ is exponentiable. 
% \end{cor}
% \begin{proof}
%     By Theorem \ref{morel}, $\mathbb{Z}^\gamma \cdot D \in \mathrm{d}\unbdd{\tlo}$. Since $\bm{n} \in \rtlo$, Theorem \ref{prop5.5} gives $\mathbb{Z}^\gamma \cdot D \cdot \bm{n} \in \mathrm{d}\unbdd{\rtlo}$. Thus, by Theorem \ref{main-thm}, we have $\mathbb{Z}^\gamma \cdot D \cdot \bm{n} \in \explo$.
% \end{proof}
% \begin{cor}\label{corex2}
%     If $\gamma\geq\bm1$ is an ordinal and $D\in\rtlo$ then $\mathbb{Z}^\gamma \cdot D\in\explo$.
% \end{cor}
% \begin{proof}
%     By Theorem \ref{morel}, $\mathbb{Z}^\gamma \in \mathrm{d}\unbdd{\tlo}$. Since $D \in \rtlo$, Theorem \ref{prop5.5} yields $\mathbb{Z}^\gamma \cdot D \in \mathrm{d}\unbdd{\rtlo}$. Thus by Theorem \ref{main-thm}, we have $\mathbb{Z}^\gamma \cdot D \in \explo$.
% \end{proof}

\section*{Declarations}
\subsection*{Ethical approval}
Not applicable

\subsection*{Declaration of interests}
None

% \subsection*{Authors' contribution}
% S.S. contributed to Sections 1, 2 and 3.4, V.S. contributed to Sections 1,3 and 4, A.K. supervised the project and contributed to all sections. All authors reviewed the manuscript.

\subsection*{Funding}
Not applicable

\subsection*{Availability of data and materials}
Not applicable

\printbibliography
\vspace{0.2in}
\noindent{}Mihir Mittal\\
Indian Institute of Technology Kanpur\\
Uttar Pradesh, India\\
Email 1: \texttt{mihirmittal24@gmail.com}\\
Email 2: \texttt{mihirm21@iitk.ac.in}

\vspace{0.2in}
\noindent{}Corresponding Author: Amit Kuber\\
Department of Mathematics and Statistics\\
Indian Institute of Technology Kanpur\\
Uttar Pradesh, India\\
Email 1: \texttt{askuber@iitk.ac.in}\\
Email 2: \texttt{expinfinity1@gmail.com}\\
Phone: (+91) 512 259 6721\\
Fax: (+91) 512 259 7500

\end{document}